\documentclass[review]{elsarticle}

\usepackage[letterpaper,margin=1in]{geometry}
\usepackage{graphicx}

\usepackage{amsmath,amssymb,amsfonts,amsthm}
\usepackage{mathtools}
\usepackage{bm}
\usepackage{mathrsfs}

\usepackage{algorithm}
\usepackage[end]{algpseudocode}

\usepackage{enumitem}
\usepackage{microtype}
\usepackage{booktabs}
\usepackage{tabularx}
\usepackage{multirow}
\usepackage{tikz}

\usepackage{xcolor}
\usepackage{hyperref}
\usepackage{cleveref}

\hypersetup{
  colorlinks=true,
  linkcolor=cyan,
  citecolor=teal,
  urlcolor=black,
  filecolor=black
}

\theoremstyle{plain}
\newtheorem{theorem}{Theorem}[section]

\newtheorem{lemma}[theorem]{Lemma}

\theoremstyle{definition}
\newtheorem{definition}[theorem]{Definition}

\theoremstyle{remark}
\newtheorem{remark}[theorem]{Remark}

\numberwithin{equation}{section}
\numberwithin{table}{section}
\numberwithin{figure}{section}

\newcommand{\C}{\mathbb{C}}
\newcommand{\N}{\mathbb{N}}
\newcommand{\Nzero}{\mathbb{N}_0}

\newcommand{\ee}{{\rm e}}

\newcommand{\dist}{\operatorname{dist}}
\newcommand{\spann}{\operatorname{span}}

\begin{document}

\begin{frontmatter}

\title{\textbf{Nearly Optimal Kolmogorov Widths under Holomorphic Mappings}}

\author[1]{Yuwen Li\corref{corresponding}}
\ead{liyuwen@zju.edu.cn}
\cortext[corresponding]{Corresponding author.}

\author[1]{Guozhi Zhang}
\ead{gzzh@zju.edu.cn}

\affiliation[1]{organization={School of Mathematical Sciences, Zhejiang University},
            addressline={866 Yuhangtang Road},
            city={Hangzhou},
            postcode={310058},
            state={Zhejiang},
            country={China}}

\begin{abstract}
This paper establishes essentially optimal asymptotic bounds for Kolmogorov widths under holomorphic mappings between complex Banach spaces. Given a compact parameter set whose Kolmogorov widths decay algebraically with rate $s$, we prove that the widths of its image under a holomorphic mapping decay algebraically with every rate $t<s$, thereby answering an open question raised by Cohen and DeVore. As an application, we obtain a sharp characterization of the approximability of solution manifolds associated with inf-sup stable parametrized PDEs. We also construct an explicit example showing that the arbitrarily small loss in the algebraic decay exponent is unavoidable. Finally, we provide similar characterization of asymptotic bounds for Kolmogorov widths in the exponentially decaying regime. Our analysis involves multilinear Taylor expansion in Banach spaces and a novel block dyadic expansion-truncation technique.
\end{abstract}

\begin{keyword}
Kolmogorov $n$-width
\sep holomorphic mapping \sep parametrized PDE
\sep model order reduction
\sep block dyadic expansion \sep reduced basis method


\end{keyword}

\end{frontmatter}

\section{Introduction}\label{sec:introduction}
Projection-based model reduction, such as the reduced basis method \cite{QuarteroniManzoniNegri2016,HesthavenPagliantiniRozza2022}, constitutes an important class of numerical techniques for simultaneously solving a family of parametrized partial differential equations (PDEs). These methods project high-fidelity PDE models onto low-dimensional approximation subspaces constructed during an offline stage. Their best achievable accuracy is governed by how well the associated parametrized solution manifold can be approximated by low-dimensional linear subspaces. It
is therefore natural to ask for the best accuracy that any linear
approximation subspace with a prescribed dimension could attain. Kolmogorov
$n$-width is the key functional-analytic concept related to this question and serves as
a theoretical benchmark for the performance of projection-based model reduction methods
\cite{Pinkus1985nWidthsIA,Lorentz1996ConstructiveA,
DeVore2017Theoretical}. It is also the fundamental concept in low-rank fast solvers \cite{BebendorfHackbusch2003,EngquistZhao2018Approximate} as well as generalized and multiscale finite element methods \cite{BabuskaLipton2011,Ma2026}.

For $n\in\mathbb N$, this width for any compact set $\mathcal K$ in a Banach space $X$ is defined by
\begin{equation}
\label{eq:width-definition}
d_n(\mathcal K)_X
:=
\inf_{\substack{V_n\subset X\\ \dim V_n\leq n}}
\sup_{a\in\mathcal K}
\inf_{v\in V_n}\|a-v\|_X.
\end{equation}
Compactness of
$\mathcal K$ implies
$
\lim_{n\to\infty}d_n(\mathcal K)_X=0.
$
We refer to
\cite{Pinkus1985nWidthsIA,Lorentz1996ConstructiveA}
for basic  properties of Kolmogorov widths. Let $Y$ be a Banach space, and let
$
u:\mathcal K\to Y
$
be the associated parameter-to-solution map of a parameter-dependent PDE. Its image
$
\mathcal M:=u(\mathcal K)\subset Y
$
is commonly referred to as the solution manifold. The quantity
$
d_n(\mathcal M)_Y
$
is the smallest uniform error attainable when all elements of
$\mathcal M$ are approximated from a single linear subspace of dimension at
most $n$. Its decay therefore describes the intrinsic linear complexity
of the underlying family of solutions.
For a wide class of parametric PDEs that satisfy the inf-sup condition, the
solution map admits a holomorphic extension to a suitable complex
neighborhood of the parameter domain. This analytic dependence is a
fundamental ingredient in sparse polynomial approximation and in the
derivation of dimension-robust convergence estimates for
high-dimensional solution families
\cite{CohenDeVoreSchwab2011,Cohen_DeVore_2015}. It also suggests that the
approximability of the solution manifold involved in the PDE structure is related to that of
the accessible parameter set.

The sequence $(d_n(\mathcal M)_Y)_{\{n\geq0\}}$ is particularly relevant to reduced basis methods that construct problem-adapted spaces
from selected solution snapshots, typically of the form
$$
V_n
=
\operatorname{span}\{u(a_1),\ldots,u(a_n)\},
$$
where $a_1,\ldots,a_n\in\mathcal K$ are chosen during an offline stage
\cite{MadayPateraTurinici2002,Maday2007,
QuarteroniManzoniNegri2016,HesthavenPagliantiniRozza2022}.
Although such snapshot spaces need not realize the infimum in the
definition of $d_n(\mathcal M)_Y$, their convergence can often be
controlled by the widths of $\mathcal M$. In particular, if $Y$ is a
Hilbert space and
$
d_n(\mathcal M)_Y=O(n^{-s}),
$ with $s>0$
then suitable reduced basis greedy algorithms converge at the same
algebraic rate
\cite{BinevEtAl2011}. Corresponding results in Banach spaces were established in
\cite{DeVorePetrovaWojtaszczyk2013,Wojtaszczyk2015} and generalized to the empirical interpolation method \cite{MadayMulaTurinici2016}, see also recent results \cite{LiSiegel2024,Li2025EIM,LiWang2026PODGreedy} using
entropy numbers to obtain refined convergence estimates for reduced basis greedy algorithms. These results show that Kolmogorov
width is not only an abstract concept in approximation theory and functional analysis, but also
a reference rate for the analysis of practical reduced basis
constructions. 

Suppose that
$\mathcal K\subset X$ admits accurate finite-dimensional linear
approximations and that
$
u:\mathcal O\to Y
$
is holomorphic on an open neighborhood $\mathcal O$ of $\mathcal K$. The preceding considerations lead to a general question that is to what extent the decay of
$
d_n(\mathcal K)_X
$ is
preserved under the nonlinear transformation $u$, 
independent of any particular PDE model.
This problem was initiated by Cohen and DeVore in
\cite{cohen2016kolmogorov}. In particular, for $s>1$ they proved that 
\begin{equation}\label{eq:CohenDeVore}
d_n(\mathcal K)_X=O(n^{-s})\Longrightarrow d_n(u(\mathcal K))_Y=O(n^{-t}),\qquad\forall\, t<s-1.
\end{equation}
Thus, their general argument entails a loss of one
full power in the decay exponent. For special   $\mathcal{K}$
and $u$ arising from parametric elliptic PDEs, the rate-comparison \eqref{eq:CohenDeVore} can be improved to $t=s$ by exploiting additional elliptic structure
of the underlying model
\cite{bachmayr2017kolmogorov}. 

The purpose of this paper is to determine the optimal asymptotic 
behavior of Kolmogorov widths under general holomorphic mappings. Our
main result shows that the loss of one full power in \eqref{eq:CohenDeVore} can be reduced to an arbitrarily small loss, see the next theorem. The proof combines localized multilinear Taylor expansions in Banach spaces with a block dyadic expansion that preserves the finite-dimensional structure of the
approximation spaces associated with $\mathcal K$, see
Section~\ref{Sec:proof_thm1}.

\begin{theorem}
\label{thm:main_result1}
Suppose that $\mathcal{O}\subset X$ is open, $\mathcal{K}\subset \mathcal{O}$ is compact, $u:\mathcal{O}\to Y$ is holomorphic, and there is a constant $C<\infty$ such that
$\sup_{a\in \mathcal{O}}\|u(a)\|_Y\le C.$
Given $s>0$, if 
\[
\sup_{n\ge1}n^s d_n(\mathcal{K})_X<\infty,
\]
then for any
 $0<t<s$, we have
\[
\sup_{n\ge1}n^t d_n(u(\mathcal{K}))_Y<\infty.
\]
\end{theorem}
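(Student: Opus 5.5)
We will approximate $u$ near each point of $\mathcal K$ by a Taylor polynomial and exploit the finite-dimensional structure of the approximation spaces for $\mathcal K$. The plan has four steps.

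\textbf{Step 1: local Taylor control.} By compactness there is $r>0$ such that the closed $2r$-neighbourhood of $\mathcal K$ lies in $\mathcal O$. We cover $\mathcal K$ by finitely many balls $B(a_j,r)$. On each ball, Cauchy estimates and the bound $\|u\|_Y\le C$ give
\[
u(a_j+h)=\sum_{k\ge0}\tfrac1{k!}D^ku(a_j)[h,\dots,h],
\qquad
\|D^ku(a_j)\|\le C\,k!\,(2r)^{-k}\ \text{(up to polarization constants)}.
\]
After shrinking $r$ by a fixed factor (to absorb the polarization factor $k^k/k!\le e^k$), the tail beyond degree $m$ is bounded by $C\,2^{-m}$ uniformly for $\|h\|\le r$. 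The number of balls does not depend on $n$, so it only affects constants.

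\textbf{Step 2: block dyadic expansion of $h$.} Let $V_{2^\ell}$ be near-optimal spaces for $\mathcal K$, with errors $\lesssim 2^{-\ell s}$. Let $P_\ell$ be near-best (possibly nonlinear) approximation maps. For $h=a-a_j$ we form the telescoping decomposition
\[
h=\sum_{\ell\ge0}h_\ell,\qquad h_\ell=P_\ell a-P_{\ell-1}a\in W_\ell:=V_{2^\ell}+V_{2^{\ell-1}}+\spann\{a_j\},
\]
so that $\dim W_\ell\lesssim 2^\ell$ and $\|h_\ell\|\lesssim 2^{-\ell s}$. We then group consecutive dyadic levels into \emph{blocks}. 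The coarse block $h^{(0)}=\sum_{\ell\le L}h_\ell$ lives in a space of dimension $\sim 2^L$. The remaining blocks $\ell\in(L_{i-1},L_i]$ have sizes chosen to grow so that block norms decay like $2^{-sL_{i-1}}$.

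\textbf{Step 3: expansion in the blocks and counting.} Substituting into the multilinear terms yields sums of $D^ku(a_j)[h^{(i_1)},\dots,h^{(i_k)}]$. A term in which the blocks $i_1,\dots,i_k$ all have finite-dimensional ranges $W^{(i)}$ lies in the span of $D^ku(a_j)[e_{1},\dots,e_{k}]$, where the $e$'s run over bases of those spaces. This set has dimension at most $\prod\dim W^{(i_p)}$, symmetrized into $\binom{\dim+k}{k}$-type counts. We then truncate in two ways:
\begin{itemize}
\item Keep all degrees $k\le m$ with $m\sim\log n$, paying $2^{-m}$.
\item Among multi-indices, keep only those for which $\prod$ of block norms exceeds $n^{-t}$.
\end{itemize}
The coarse block enters with arbitrary power. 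Because polynomials of degree $m$ on a $2^L$-dimensional space have dimension $\binom{2^L+m}{m}$, we must take $2^L$ small, about $n^{\varepsilon}$ or even $\log$-sized. This is handled by making the coarse block's dimension $\sim m$ and choosing its level so that $\binom{2m}{m}\le 4^m$ stays polynomial in $n$ with tiny exponent, via a base shift $m=\delta\log n$.

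\textbf{Step 4: balancing and the main obstacle.} The main obstacle is precisely this balance. Terms involving many fine blocks are small because each fine factor carries a factor $2^{-sL}$. Their dimension, however, is multiplicative, $\prod 2^{L_{i_p}}$, while their size is $\prod 2^{-sL_{i_p-1}}$. With geometrically growing block endpoints, a product of $q$ fine factors costs dimension $2^{\sum L}$ and gains $2^{-s\sum L_{\text{prev}}}$, so the ratio loses only the block-growth factor. Choosing block endpoints $L_i=(1+\eta)^iL_0$, or equal-length blocks of length $\eta\log n$, makes the exponent loss $O(\eta)$. The number of multi-indices grows like $n^{O(\eta)}$.

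Summing, the total dimension is $N\lesssim n^{1+C\eta}$ and the error is $\lesssim n^{-s}+2^{-m}$. With $m\simeq C' \log n$, this gives $d_N(u(\mathcal K))\lesssim N^{-s/(1+C\eta)}$. Letting $\eta\to0$ gives every $t<s$.

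The delicate point I would check first is that the mixed terms pairing coarse factors (large norm, about $r$) with few fine factors are counted correctly. Their dimension is roughly $\binom{\dim W^{(0)}+k}{k}\cdot 2^{L_i}$, and keeping the coarse block at dimension $O(\log n)$ is what prevents a loss of a full power, as in \eqref{eq:CohenDeVore}.
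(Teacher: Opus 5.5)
Your proposal has a genuine gap, and it sits exactly where you flag the main obstacle: as written, Steps 3--4 are inconsistent.

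In Step 1 the Taylor ratio is a fixed constant ($1/2$), independent of $t$. A degree cutoff $m$ therefore costs $2^{-m}$, and reaching accuracy $n^{-t}$ forces $m\ge t\log_2 n$. But then your own coarse-block count, $\binom{2m}{m}\ge 4^m/(2m+1)$, is of order $n^{2t}$ up to a logarithm, not a tiny power of $n$. Conversely, taking $m=\delta\log n$ with $\delta$ small leaves a truncation error $n^{-\delta\log 2}\gg n^{-t}$. The coarse block also has norm comparable to $r$, so your rule ``keep multi-indices whose product of block norms exceeds $n^{-t}$'' admits coarse powers up to order $t\log_2 n$ anyway. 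The final accounting ($N\lesssim n^{1+C\eta}$, error $\lesssim n^{-s}+2^{-m}$) is therefore asserted, not derived.

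There are two further problems. First, thresholding individual terms bounds neither the sum of the infinitely many discarded terms nor the total dimension of the retained ones; you need a summability statement for both. Second, equal-length blocks of length $\eta\log n$ do not give an $O(\eta)$ loss. Each of $q$ fine factors loses $n^{\eta}$ in dimension, and $q$ can be of order $1/\eta$ or larger, which produces a polynomial loss of at least $n^{t/s}$ that does not vanish as $\eta\to0$.

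The missing idea is to make \emph{every} piece of the increment small, with a smallness that depends on $t$ but not on $n$. The paper does this by localization:
\begin{itemize}
\item It covers $\mathcal K$ by $M$ balls of radius $2^{-sJ}$, with $J=J(t)$ fixed.
\item It writes $x-a_i=\sum_{r\ge0}g_{i,r}(x)$ with $g_{i,0}(x)=\pi_J(x)-\pi_J(a_i)$, the difference of best approximations from the level-$J$ space. Hence even the coarsest piece satisfies $\|g_{i,r}(x)\|_X\le C_G2^{-s(J+r)}$.
\item With $\beta_r=\frac{\ee C_G}{\rho}2^{-s(J+r)}$ and $d_r=2^{J+r+1}$, one gets $Q_J=\sum_r d_r^{1-p}\beta_r^p\lesssim 2^{-p(s-t)J}<1$. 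The weighted sum over \emph{all} ordered multi-indices is then the geometric series $MC^p\sum_m Q_J^m<\infty$.
\item Lemma~\ref{Le:block-truncation_algebraic_width} retains the blocks with $a_\lambda/D_\lambda>\delta$. Their total dimension is at most $\delta^{-p}S_p$ and the discarded error at most $\delta^{1-p}S_p$, which gives the rate $(1-p)/p=t$ directly.
\end{itemize}
No degree truncation and no $n$-dependent coarse block are needed; the only ``coarse'' part is $\spann\{u(a_1),\dots,u(a_M)\}$.

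If you prefer to keep fixed-radius balls, a coarse block whose dimension is independent of $n$ can also be made to work. It must be counted via symmetric tensors, so that it contributes only factors polynomial in the degree. What breaks your count is letting the coarse dimension grow with $n$.
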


\begin{remark}
On the square domain $\Omega=(0,1)^2$, consider the parameter class of piecewise constant coefficients with a jump along a varying Lipschitz continuous interface:
\[
\mathcal K=\{a\in L^\infty(\Omega): a|_{\Omega^-}=1,\, a|_{\Omega^+}=2\text{ for some Lipschitz interface $\Gamma$ dividing $\Omega$ into $\Omega^-$ and $\Omega^+$}\}.
\]
The width of $\mathcal{K}$ satisfies
$
d_n(\mathcal K)_{L_q(\Omega)}
\asymp
n^{-1/(2q)}
$
for $q\geq 2$, see \cite{DeVore2017Theoretical}. As discussed in \cite{DeVore2017Theoretical}, width decay of the solution manifold $u(\mathcal{K})$ of the elliptic geometric model $\nabla\cdot(a\nabla u(a))=f$ on $\Omega$ with boundary condition  $u(a)|_{\partial\Omega}=0$ and diffusion coefficient $a\in\mathcal{K}$ remains to be an open question. In particular, since the exponent $1/(2q)$ is strictly less than
one, the classical result in \eqref{eq:CohenDeVore} fails to predict any decay rate of 
$d_n(u(\mathcal K))$.  In contrast, 
Theorem \ref{thm:main_result1} yields
$
d_n(u(\mathcal K))_{H_0^1(\Omega)}
=
O(n^{-t})
$
for every $t\in(0,1/(2q))$, a positive algebraic decay rate for Kolmogorov $n$-width of the solution manifold of the elliptic interface model.
\end{remark}

Our second result shows that this almost-preservation theorem is sharp.
In particular, the endpoint estimate
$
d_n(u(\mathcal K))_Y=O(n^{-s})
$
cannot hold uniformly over the class of all holomorphic mappings.
Nevertheless, endpoint preservation may occur under additional assumptions \cite{bachmayr2017kolmogorov}. 

\begin{theorem}
\label{thm:main_result2}
For any $s>0$, there exist separable complex Hilbert spaces $X$ and $Y$, a compact set $\mathcal{K}\subset X$, an open set $\mathcal{O}\subset X$ containing $\mathcal{K}$, and a holomorphic map $u:\mathcal{O}\to Y$ such that:
\begin{enumerate}[label=(\roman*),leftmargin=2.4em]
\item $u$ is the restriction to $\mathcal{O}$ of a continuous two-homogeneous polynomial on $X$;
\item $u$ is uniformly bounded on $\mathcal{O}$;
\item $
d_n(\mathcal{K})_X \asymp n^{-s}$ and  $d_n(u(\mathcal{K}))_Y\asymp n^{-s}\bigl(\log(n+1)\bigr)^s$
 for $n\ge1$.
\end{enumerate}
\end{theorem}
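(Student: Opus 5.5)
We will build the example from a diagonal ellipsoid and a tensor-square map, so that both widths can be computed through singular values. Take $X=\ell^2(\N)$ over $\C$ with orthonormal basis $(e_j)$ and set
\[
\mathcal K=\Bigl\{a=\textstyle\sum_j a_j e_j:\ \sum_j j^{2s}|a_j|^2\le 1\Bigr\}.
\]
This set is compact, and the classical formula for widths of ellipsoids gives $d_n(\mathcal K)_X=(n+1)^{-s}\asymp n^{-s}$. Take $Y=\ell^2(\N\times\N)$, which is a separable Hilbert space isometric to $X\hat\otimes X$, and define
\[
u(a)=(a_ia_j)_{i,j},\qquad \|u(a)\|_Y=\|a\|_X^2 .
\]
This map is a continuous $2$-homogeneous polynomial, since it is the restriction of the bounded symmetric bilinear map $(a,b)\mapsto a\otimes b$ to the diagonal. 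Hence it is entire. With $\mathcal O$ the open ball of radius $2$, $u$ is bounded by $4$ on $\mathcal O$, so (i) and (ii) hold.

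Next we need the widths of $u(\mathcal K)=\{a\otimes a: a\in\mathcal K\}$.

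\emph{Lower bound.} $u(\mathcal K)$ contains many rank-one tensors supported on a block. Fix $m$ and consider $a=m^{-s}(\pm e_{1},\ldots,\pm e_{m})/\sqrt{m}\cdot c$ with suitable signs and a normalization $c$ so that $a\in\mathcal K$. These points are admissible because $\sum_{j\le m} j^{2s}|a_j|^2\le m^{2s}\cdot m^{-2s}=1$. Instead of sign patterns, the cleaner route is to use the balanced hull: the closed absolutely convex hull of $\{a\otimes a\}$ contains $m^{-2s}\cdot\frac1m\,\mathrm{diag}$-type elements. We would also use polarization, $a\otimes b+b\otimes a$ as a combination of four squares, which shows that $\mathrm{absconv}\,u(\mathcal K)$ contains $c\,(a\otimes b+b\otimes a)$ for $a,b\in\mathcal K$. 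Since Kolmogorov widths are unchanged by passing to the closed absolutely convex hull, $d_n(u(\mathcal K))$ is bounded below by $d_n$ of the symmetric "tensor ellipsoid"
\[
\Bigl\{\textstyle\sum c_{ij}e_i\otimes e_j:\ \sum (ij)^{2s}|c_{ij}|^2\lesssim 1\Bigr\}.
\]
Precisely, $e_i\otimes e_j+e_j\otimes e_i$ scaled by $(ij)^{-s}$ lies in the hull up to a constant. For a lower bound we would rather avoid proving that the whole ellipsoid lies in the hull and use only a finite-dimensional subspace. Take the span of $\{e_i\otimes e_j+e_j\otimes e_i:\ ij\le N\}$, whose dimension is $\asymp N\log N$. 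The set of $\ell^1$-normalized combinations of the scaled generators is contained in the hull, but $\ell^1$ balls give poor widths, so we need an $\ell^2$-ball. To get one, use random signs. With $a=\sum_{i\in I}\varepsilon_i i^{-s}e_i/\sqrt{|I|}$ and $b=\sum_{j\in J}\delta_j j^{-s}e_j/\sqrt{|J|}$ (each in $\mathcal K$), averaging sign products against coefficients by a Rademacher/Hadamard-type argument recovers an $\ell^2$ ball of radius roughly $N^{-s}$ in a space of dimension $\asymp N\log N$ spanned by pairs with $ij\asymp N$. A hyperbolic-cross counting gives $\#\{(i,j): ij\le N\}\asymp N\log N$. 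Choosing $N$ with $N\log N\asymp 2n$ gives $d_n\gtrsim N^{-s}\asymp n^{-s}(\log n)^s$, using $d_n$ of an $\ell^2$-ball of dimension $2n$ equal to its radius.

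\emph{Upper bound.} Every $a\otimes a$ lies in the tensor ellipsoid with $\sum (ij)^{2s}|a_ia_j|^2=(\sum j^{2s}|a_j|^2)^2\le1$. That ellipsoid has semi-axes $(ij)^{-s}$, whose nonincreasing rearrangement $\sigma_n$ satisfies $\sigma_n\asymp (n/\log n)^{-s}$ by the same hyperbolic counting. Hence $d_n(u(\mathcal K))\le\sigma_{n+1}\asymp n^{-s}(\log(n+1))^s$.

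The main obstacle is the lower bound: showing that $u(\mathcal K)$, which is not convex, is "as large as" the full tensor ellipsoid on the hyperbolic block. We would first try the cleanest form of this step. On the block $B=\{(i,j): N/2<ij\le N,\ i\ne j\}$, take the rescaled tensors $a\otimes b$ with $a,b$ sign vectors. Their closed absolutely convex hull contains the set $\{C: \|C\|_{\text{nuclear}}\lesssim\ldots\}$. A Hadamard-matrix argument then shows that this hull contains an $\ell^2$ ball of comparable dimension, up to constants independent of $N$, possibly after restricting to dyadic sub-blocks $i\sim 2^k$, $j\sim N2^{-k}$. On each such dyadic block, sign-rank-one tensors $\varepsilon\otimes\delta$ times $2^{-k/2}(N2^{-k})^{-1/2}N^{-s}$ span via Hadamard a Frobenius ball of radius $\asymp N^{-s}$, since the Frobenius norm of a $p\times q$ sign matrix is $\sqrt{pq}$. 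Summing over the $\asymp\log N$ dyadic blocks, using orthogonality and convexity, yields the needed $\ell^2$ ball of dimension $\asymp N\log N$.
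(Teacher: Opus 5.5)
Your construction (the ellipsoid $\mathcal K$ with $u(a)=a\otimes a$) is a legitimate candidate. Items (i) and (ii), $d_n(\mathcal K)_X=(n+1)^{-s}$, and the upper bound through the tensor ellipsoid with semi-axes $(ij)^{-s}$ are all fine.

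\textbf{The gap is in the lower bound.} The step that is supposed to produce an $\ell^2$-ball of radius $\asymp N^{-s}$ and dimension $\asymp N\log N$ inside the hull of $u(\mathcal K)$ is false as stated. Take a block $I\times J$ with $|I|=p\le q=|J|$. The absolutely convex hull of the tensors $\varepsilon\otimes\delta$ does not contain a Frobenius ball of radius comparable to $\|\varepsilon\otimes\delta\|_F=\sqrt{pq}$. To see this, let $H$ be a $p\times q$ matrix with unimodular entries and orthogonal rows (e.g.\ $p$ rows of the $q\times q$ Fourier matrix). Then $\|H\|_F=\sqrt{pq}$ and $\|H\|_{\mathrm{op}}=\sqrt q$, so $|\langle H,\varepsilon\otimes\delta\rangle|\le\sqrt p\,\|H\|_{\mathrm{op}}\sqrt q=\sqrt p\,q$. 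Hence any Frobenius ball contained in that hull has radius at most $\sqrt q$, a factor $\min(p,q)^{-1/2}$ below the generators. With your scaling, the blocks with $i\sim j\sim\sqrt N$ therefore contain only a ball of radius $\lesssim N^{-s-1/4}$. Gluing the $\asymp\log N$ orthogonal blocks by convexity loses a further factor. The convex hull of balls of radius $\rho$ in $L$ mutually orthogonal subspaces is $\{w:\sum_k\|w_k\|\le\rho\}$, whose largest inscribed Euclidean ball has radius only $\rho/\sqrt L$. So your argument does not produce the $\ell^2$-ball it needs.

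\textbf{The detour is also unnecessary.} Its premise, that $\ell^1$-balls give poor widths, is wrong for Kolmogorov widths in a Hilbert space. For orthonormal $g_1,\dots,g_{2n}$ and any $V$ with $\dim V\le n$, Bessel's inequality gives $\sum_{k\le 2n}\dist(g_k,V)^2\ge n$; this is exactly Lemma~\ref{lem:orthogonal-spikes}.

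Your own polarization identity closes the gap. Take $a=i^{-s}e_i$ and $b=j^{-s}e_j$ with $i<j$. Both $(a\pm b)/\sqrt2$ lie in $\mathcal K$, and
$u\bigl(\tfrac{a+b}{\sqrt2}\bigr)-u\bigl(\tfrac{a-b}{\sqrt2}\bigr)=(ij)^{-s}(e_i\otimes e_j+e_j\otimes e_i)$.
These vectors are mutually orthogonal with norms $\sqrt2\,(ij)^{-s}$. Since $\dist(\cdot,V)$ is a seminorm, the spike lemma gives $d_n(u(\mathcal K))_Y\ge \tfrac12 b_{2n}$, with $b_m$ as in Lemma~\ref{Le:rearrangement}. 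That lemma then yields $\tfrac12 b_{2n}\gtrsim n^{-s}(\log(n+1))^s$.

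With this repair your ellipsoid example works, and it has the mild advantage that $\mathcal K$ is convex. The paper sidesteps even the hull step. It takes the discrete set $\mathcal K=\{0\}\cup\{a_ie_i\}\cup\{a_ie_i+a_je_j\}$ and $u(x)=(x_ix_j)_{i<j}$, so $u(\mathcal K)=\{0\}\cup\{(ij)^{-s}f_{ij}\}$ is literally an orthogonal spike set. Both bounds then follow from Lemma~\ref{lem:orthogonal-spikes} combined with Lemma~\ref{Le:rearrangement}.
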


Motivated by the almost optimal preservation of algebraic decay, it is
natural to ask whether an analogous phenomenon occurs in the exponentially convergent regime. The next two results  provide a negative answer to this
question.

\begin{theorem}\label{thm:main_result3}
Suppose that there exist $C_0,c_0,\alpha>0$ such that
\[
 d_n(\mathcal{K})_X\le C_0\ee^{-c_0n^\alpha},
 \qquad n\ge1.
\]
Then under the assumptions of Theorem \ref{thm:main_result1}, there are constants $C_1,c_1>0$ such that
$$
 d_n(u(\mathcal{K}))_Y
 \le C_1\ee^{-c_1[\log(n+2)]^{\alpha+1}},
 \qquad n\ge1.
$$
\end{theorem}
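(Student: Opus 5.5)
The plan is to reuse the localized Taylor machinery behind Theorem~\ref{thm:main_result1}, but to tune the block dyadic truncation to the stretched-exponential regime. \emph{Localization.} By compactness, cover $\mathcal K$ by finitely many balls $B(a_0,r)$ such that $B(a_0,2\rho)\subset\mathcal O$ with $\rho>r$. Since the Kolmogorov width of a finite union is controlled by sums of widths, it suffices to treat one ball. On it,
\[
u(a)=\sum_{k\ge0}P_k(a-a_0),\qquad \|\check P_k(h_1,\dots,h_k)\|_Y\le C\Bigl(\frac{\ee}{\rho}\Bigr)^k\prod_{i=1}^k\|h_i\|_X ,
\]
by the Cauchy estimates and the polarization bound for the symmetric multilinear form $\check P_k$. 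Both bounds use $\sup_{\mathcal O}\|u\|_Y\le C$. Choosing $r$ small makes the ratio $\theta:=\ee r/\rho<1$, so the Taylor tail beyond degree $K$ is $O(\theta^K)$ uniformly on $\mathcal K\cap B(a_0,r)$.

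\emph{Block dyadic expansion.} Let $V_m$ be near-optimal $m$-dimensional spaces for $\mathcal K$. For $a$ in the ball, write $a-a_0=h_0+\sum_{j=1}^{J}h_j+e_J$. Here $h_0$ lies in a fixed bounded-dimensional space containing $a_0$ and the coarse approximation. The increment $h_j\in V_{2^j}+V_{2^{j-1}}$ satisfies $\|h_j\|\le 2C_0\ee^{-c_02^{(j-1)\alpha}}$, and $\|e_J\|\le C_0\ee^{-c_02^{J\alpha}}$. Insert this into $P_k$ and expand multilinearly. The term indexed by a multiset containing $r_j$ copies of level $j$ has norm at most $C\theta_0^{k}\prod_j \ee^{-c_0r_j2^{(j-1)\alpha}}$. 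Its value lies in the image of a symmetric tensor product of the level spaces, whose dimension is at most $\prod_j\binom{2^{j+1}+r_j}{r_j}$. This symmetric count, rather than $\prod_j (2^{j+1})^{r_j}$, is essential.

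\emph{Truncation rule and counting.} Fix a budget $L$. Keep all degrees $k\le L$ and all level-profiles $(r_j)$ with $\sum_j r_j2^{\alpha j}\le L$. Discard $e_J$ for $2^{J\alpha}\approx L$. Summing the Taylor tail, the discarded profiles (using the geometric factors), and the Lipschitz effect of $e_J$ bounds the error by $\ee^{-cL}$. The union $W_L$ of all retained ranges has dimension $N$. Level $j$ contributes $\log\binom{2^{j+1}+r_j}{r_j}\lesssim\min\{r_j\,j,\;2^{j}\log L\}$ with $r_j\le L2^{-\alpha j}$. The first branch dominates while $2^{j(1+\alpha)}\gtrsim L$, the second below. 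Balancing at $2^{j}\approx L^{1/(1+\alpha)}$ and summing the geometric series in $j$ gives $\log N\lesssim L^{1/(1+\alpha)}\log L$ up to constants. The number of admissible profiles is only $\ee^{O(L^{1/(1+\alpha)}\log L)}$ as well. Inverting, $L\gtrsim(\log N)^{1+\alpha}$ up to logarithmic factors, so the error is $\ee^{-c(\log N)^{\alpha+1}}$, and monotonicity of $d_n$ fills in intermediate $n$.

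\emph{Main obstacle.} The hardest step is this dimension count, and in particular removing the stray $\log L$ factor so that the exponent is exactly $\alpha+1$. If the naive estimate leaves a $\log\log$ loss, my first attempt is to refine the blocks. I would replace the dyadic sizes $2^j$ by sizes $m_j$ chosen so that $m_j^{\alpha}\approx j$-th budget slice, and truncate each level at $r_j\approx m_j$. Then $\binom{2m_j}{m_j}\le 4^{m_j}$ has no logarithm, and $\log N\lesssim\sum_j m_j\approx L^{1/(1+\alpha)}$ exactly. The cost is that the budget inequality must be re-verified with these non-dyadic sizes. A secondary point is ensuring that the cross terms involving the coarse block $h_0$ do not inflate the dimension. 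This is handled by placing $a_0$ and the level-$0$ space inside every retained tensor space, which only adds a factor $\binom{\dim V_1+k}{k}\le \ee^{O(\log L)}$ per term.
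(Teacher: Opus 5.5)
Your architecture matches the paper's: localized Taylor expansion, dyadic telescoping blocks, grouping of ordered multi-indices by multiplicity profile, and the symmetric-tensor dimension bound. The one structural difference is that you use a hard cutoff $\sum_j r_j2^{\alpha j}\le L$ where the paper uses a weighted $\ell_p$ block truncation.

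\textbf{The gap.} It lies in the dimension count, which is exactly where the exponent $\alpha+1$ is decided. From $\log N\lesssim L^{1/(1+\alpha)}\log L$ you can only invert to $L\gtrsim(\log N/\log\log N)^{\alpha+1}$. That gives $d_n(u(\mathcal K))_Y\le C\exp\bigl(-c(\log n)^{\alpha+1}(\log\log n)^{-\alpha-1}\bigr)$, which is strictly weaker than the theorem. So the step ``up to logarithmic factors, so the error is $\ee^{-c(\log N)^{\alpha+1}}$'' does not follow. Your profile count $\ee^{O(L^{1/(1+\alpha)}\log L)}$ carries the same loss and would spoil the rate on its own. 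The remedy you sketch (non-dyadic sizes $m_j$, capping $r_j\approx m_j$) is not carried out. Capping also conflicts with the error budget: at coarse levels with $2^{(1+\alpha)j}<L$, profiles with $2^j<r_j\lesssim L2^{-\alpha j}$ have amplitude far above $\ee^{-cL}$ and must be retained.

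\textbf{The fix.} No re-blocking is needed; the $\log L$ comes only from your crude bounds $r_j\,j$ and $2^j\log L$. Use $\log\binom{d+r}{r}\le\min\{r\log(\ee(d+r)/r),\,d\log(\ee(d+r)/d)\}$ and set $2^{j_*}:=L^{1/(1+\alpha)}$. For $j\ge j_*$ the budget gives $r_j\le L2^{-\alpha j}\le 2^j$, and level $j$ contributes $\lesssim L2^{-\alpha j}\bigl(1+(j-j_*)\bigr)$. For $j<j_*$ it contributes $\lesssim 2^j\bigl(1+(j_*-j)\bigr)$. Both sums are geometric around $j_*$, so every retained block has log-dimension $\lesssim L^{1/(1+\alpha)}$. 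The number of profiles is at most $\prod_{j\le J}(1+L2^{-\alpha j})\le(1+L)^J=\ee^{O((\log L)^2)}$, and the multiplicity of $h_0$ adds only factors polynomial in $L$. Hence $\log N\lesssim L^{1/(1+\alpha)}$, and the stated rate follows.

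The paper reaches the same bound without case analysis. Using $\sum_k\binom{d+k-1}{k}z^k=(1-z)^{-d}$, it proves $Z(\tau)=\sum_\nu D_\nu\ee^{-\tau E(\nu)}\le\exp(C\tau^{-1/\alpha})$. This gives $\sum_{E(\nu)\le L}D_\nu\le\ee^{\tau L}Z(\tau)\le\exp\bigl(\tau L+C\tau^{-1/\alpha}\bigr)=\exp\bigl(O(L^{1/(1+\alpha)})\bigr)$ for $\tau=L^{-\alpha/(1+\alpha)}$. Lemma~\ref{Le:block-truncation_exponential_width} packages the same Rankin-type trick with $p\approx(\log N)^{-\alpha}$.

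\textbf{A smaller point.} Geometric convergence of the block expansion requires $\frac{\ee}{\rho}\bigl(\|h_0\|+\sum_j\|h_j\|+\|e_J\|\bigr)<1$, not $\ee r/\rho<1$. You also need to split the decay exponent to absorb the multinomial counts $k!/\prod_j r_j!$. With increments starting at level $1$, $\sum_j\|h_j\|$ and $\|h_0\|\le r+C_0\ee^{-c_0}$ are $O(1)$, not small. So you must start the dyadic scale at a fixed deep level, as the paper does by choosing $J$ with $q_J<1$, and absorb that level into the fixed space containing $h_0$.
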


\begin{theorem}\label{thm:main_result4}
For any $\alpha>0$, there exist separable complex Hilbert spaces $X$ and $Y$, a compact set $\mathcal{K}\subset X$, and a mapping $u:X\to Y$  holomorphic in any bounded subset of $X$, such that 
\begin{align*}
  C_1\ee^{-c_1n^\alpha}
 &\le d_n(\mathcal{K})_X
 \le C_2\ee^{-c_2n^\alpha},\\
  C_3\ee^{-c_3[\log(n+2)]^{\alpha+1}}
 &\le d_n(u(\mathcal{K}))_Y
 \le C_4\ee^{-c_4[\log(n+2)]^{\alpha+1}}
\end{align*}
for any $n\geq1$ and some absolute constants $C_1,c_1,C_2,c_2,C_3,c_3,C_4,c_4>0$.
\end{theorem}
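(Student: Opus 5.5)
For the parameter set I would take a diagonal ``box''. Let $X=\ell^2(\N)$ with orthonormal basis $(e_j)$, set $\sigma_j:=\ee^{-j^\alpha}$, and define
\[
\mathcal K:=\Bigl\{\textstyle\sum_j a_j\sigma_je_j:\ a_j\in\C,\ |a_j|\le1\Bigr\}.
\]
This set is compact because $\sigma_j\to0$. Its Kolmogorov widths satisfy $d_n(\mathcal K)_X\asymp\sigma_{n+1}$. The upper bound comes from taking $V_n=\spann\{e_1,\dots,e_n\}$. The lower bound is the classical estimate $d_n\ge\sigma_{n+1}$ for an ellipsoid or box containing the $(n+1)$-dimensional ball of radius $\sigma_{n+1}$. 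This gives the first pair of inequalities.

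For the map I would use the exponential into symmetric Fock space. Let $Y=\bigoplus_{k\ge1}X^{\odot k}$ be the Hilbert direct sum of symmetric tensor powers, and set
\[
u(x):=\sum_{k\ge1}\frac{1}{k!}\,x^{\otimes k}.
\]
Since $\|x^{\otimes k}\|=\|x\|^k$, the series converges locally uniformly on all of $X$. Hence $u$ is entire and bounded on every bounded set. The upper bound $d_n(u(\mathcal K))_Y\le C_4\ee^{-c_4[\log(n+2)]^{\alpha+1}}$ then follows directly from Theorem~\ref{thm:main_result3}. To apply it, take $\mathcal O$ to be a bounded open neighborhood of $\mathcal K$ (say the ball of radius $2$), on which $u$ is bounded.

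The main work is the lower bound, which I would prove by an averaging argument. Put the uniform probability measure on the torus $a_j=\ee^{i\theta_j}$. Expanding in the orthonormal basis of normalized symmetric monomials $\{\hat e^{\nu}\}$ of $Y$, we get
\[
u(x)=\sum_\nu \gamma_\nu\, a^\nu\sigma^\nu\,\hat e^{\nu},
\]
where for $|\nu|=k$ we have $\gamma_\nu=\frac1{k!}\sqrt{k!/\nu!}$. Orthogonality of the characters, $\mathbb E[a^\nu\bar a^\mu]=\delta_{\nu\mu}$, shows that the covariance operator of $u(x)$ is diagonal in this basis, with eigenvalues $\lambda_\nu=\gamma_\nu^2\sigma^{2\nu}$.

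For any subspace $V_n$ of dimension $n$, the squared sup-error is at least the mean squared error. By Ky Fan's principle, the mean squared error is at least the sum of all eigenvalues beyond the $n$ largest. In particular,
\[
d_n(u(\mathcal K))_Y^2\ \ge\ \lambda_{(n+1)},
\]
where $\lambda_{(n+1)}$ denotes the $(n+1)$-st largest eigenvalue.

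It remains to exhibit many large eigenvalues. For $m\in\N$, consider the $2^m$ multi-indices $\nu\in\{0,1\}^m$. Each satisfies $\sum_j\nu_jj^\alpha\le m^{\alpha+1}$ and $|\nu|=k\le m$, so
\[
\lambda_\nu\ \ge\ \frac{\ee^{-2m^{\alpha+1}}}{m!}\ \ge\ \ee^{-3m^{\alpha+1}}
\]
for $m$ large. The factorial loss is only $\ee^{O(m\log m)}$, which is negligible against $\ee^{-m^{\alpha+1}}$; the empty index $\nu=0$ can be dropped without affecting counting.

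Given $n$, choose $m$ with $2^{m-1}\le n+1<2^m$, so that $m\asymp\log(n+2)$. Then
\[
d_n(u(\mathcal K))_Y\ \ge\ \ee^{-\frac32 m^{\alpha+1}}\ \ge\ C_3\,\ee^{-c_3[\log(n+2)]^{\alpha+1}}.
\]

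The step I expect to be delicate is exactly this counting-versus-coefficient balance. The multi-indices have to be chosen so that the combinatorial count grows like $\ee^{cm}$ while the weights $\sigma^\nu$ and the factorial damping $1/k!$, which is needed for $u$ to be entire, cost only $\ee^{-O(m^{\alpha+1})}$. Restricting to $0$--$1$ indices supported in $\{1,\dots,m\}$ is what makes both requirements compatible. One must also check that the normalized symmetric monomials are genuinely orthonormal, so that the covariance is diagonal with the stated eigenvalues.
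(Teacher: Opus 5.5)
Your proposal is correct, and your lower bound for $d_n(u(\mathcal K))_Y$ takes a genuinely different route from the paper's.

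The paper uses the same box $\mathcal K$ (with $a_j=\ee^{-\kappa j^\alpha}$) but a different map, the square-free map $u(x)_S=\eta^{|S|}\prod_{j\in S}x_j$ into $\ell_2(\mathfrak S;\C)$. Its lower-bound argument is deterministic:
\begin{enumerate}[label=(\roman*)]
\item It takes the points $x^S=\sum_{j\in S}a_je_j$ with $S\subset\{1,\ldots,N\}$ and $|S|=\lfloor N/2\rfloor$.
\item It projects onto $\spann\{f_S\}$ over these $S$, which removes the lower-degree coordinates $f_T$ with $T\subsetneq S$.
\item This leaves $\binom{N}{\lfloor N/2\rfloor}$ orthogonal spikes, each of size at least $\ee^{-CN^{\alpha+1}}$.
\item It applies Lemma~\ref{lem:orthogonal-spikes} along the sparse indices $n_N=\lfloor\frac12\binom{N}{\lfloor N/2\rfloor}\rfloor$.
\item It interpolates to all $n$ by monotonicity of $d_n$.
\end{enumerate}

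Your Haar averaging over the torus $|a_j|=1$ replaces all of these steps. Orthogonality of the characters $a^\nu$ diagonalizes the second-moment operator of $u(x)$ in the monomial basis for all degrees simultaneously. So you need no fixed degree, no projection and no subsequence interpolation. Ky Fan's principle then handles every $n$ directly, and even gives the stronger tail-sum bound $d_n(u(\mathcal K))_Y^2\ge\sum_{i>n}\lambda_{(i)}$. The price is some extra machinery: a probability measure on $\mathbb T^{\N}$, a trace-class second-moment operator, and Ky Fan's principle, in place of the paper's purely elementary Bessel argument.

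Using the Fock-space exponential rather than the square-free map plays no essential role. In both cases the coordinates of $u(x)$ are distinct monomials in $x$, so either lower-bound argument works for either map.

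There are two small slips, neither of which affects the conclusion.
\begin{enumerate}[label=(\roman*)]
\item The claim $d_n(\mathcal K)_X\asymp\sigma_{n+1}$ is false for $\alpha<1$. The coordinate space $\spann\{e_1,\ldots,e_n\}$ gives $\bigl(\sum_{j>n}\sigma_j^2\bigr)^{1/2}$. Your own averaging argument, applied to $x$ itself, shows this is the exact width of the box. It behaves like $n^{(1-\alpha)/2}\ee^{-n^\alpha}$, which is still at most $C\ee^{-cn^\alpha}$ for every $c<1$, and that is all the statement requires.
\item The ball of radius $2$ need not contain $\mathcal K$ when $\alpha$ is small, because $\sup_{x\in\mathcal K}\|x\|_X^2=\sum_j\ee^{-2j^\alpha}$ grows without bound as $\alpha\to0$. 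Take instead any ball whose radius exceeds this supremum. Your entire map $u$ is bounded there, so Theorem~\ref{thm:main_result3} still applies.
\end{enumerate}
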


The example in
Theorem~\ref{thm:main_result4} matches the decay rate of $d_n(u(\mathcal K))_Y$ in
Theorem~\ref{thm:main_result3}. Consequently, for general holomorphic
mappings, no estimate of the form
$
d_n\bigl(u(\mathcal K)\bigr)_Y
\leq
C\ee^{-cn^\beta}
$ for some $\beta>0$,
can follow solely from  exponential decay of
$
d_n(\mathcal K)_X
$.

The rest of the paper is organized as follows.
In Section~\ref{Sec:Preliminaries}, we introduce the notation and recall
basic results on complex analysis in Banach spaces.
Section~\ref{Sec:proof_thm1} is devoted to the proof of
Theorem~\ref{thm:main_result1}, while
Section~\ref{Sec:proof_thm2} contains the construction of examples in
Theorem~\ref{thm:main_result2}.
The proofs of Theorems~\ref{thm:main_result3} and
\ref{thm:main_result4} are given in
Section~\ref{Sec:proof_thm3_4}.
Finally, concluding remarks are presented in
Section~\ref{Sec:conclusion}.

\section{Preliminaries}\label{Sec:Preliminaries}
\subsection{Notation}
\label{subsec:notation}
Throughout this paper, $X$ and $Y$ denote complex Banach spaces, and
$\|\cdot\|_X$ and $\|\cdot\|_Y$ denote their respective norms. We write
$\mathbb N_+:=\{1,2,\ldots,\}$ and $
\mathbb N:=\mathbb N_+\cup\{0\}
$. For $m\in\mathbb N_+$, we denote by $X^m$ the $m$-fold Cartesian product of $X$, equipped with the product norm. For $x\in\mathbb R$, we denote by $\lfloor x\rfloor$ the greatest
integer not exceeding $x$.
For $a\in X$ and $r>0$, let
$B_X(a,r):=\{x\in X:\|x-a\|_X<r\}.$
The distance from $x\in X$ to a subset $V\subset X$ is 
$$
\operatorname{dist}(x,V)_X
:=
\inf_{v\in V}\|x-v\|_X.
$$
For two nonnegative sequences $(a_n)_{\{n\geq1\}}$ and
$(b_n)_{\{n\geq1\}}$, we write
$
a_n=O(b_n)
$ if there exists a constant $c>0$, independent of $n$, such that
$a_n\leq cb_n$ for all $n\geq1$. We write
$
a_n\asymp b_n
$
if both $a_n=O(b_n)$ and $b_n=O(a_n)$ hold.
For a countable index set $\Lambda$, we write
$$
\ell_2(\Lambda;\mathbb C)
:=
\left\{
x=(x_\lambda)_{\{\lambda\in\Lambda\}}:
\sum_{\lambda\in\Lambda}|x_\lambda|^2<\infty,\,\text{each }x_\lambda\in\mathbb{C}
\right\}.
$$
Its standard orthonormal basis is denoted by
$(e_\lambda)_{\{\lambda\in\Lambda\}}$, unless another symbol is
specified. For a finite set $S$, we denote its cardinality by $|S|$ or $\#S$.
\subsection{Complex analysis in Banach spaces}
In this section we recall theory of holomorphic mappings between Banach spaces via power series expansions. For further details, we refer the reader to \cite[Chapter II]{mujica2010complex}. Homogeneous polynomials play a fundamental role in the definition of power series in Banach spaces. We first recall their definition as follows.

\begin{definition}[\bf Homogeneous polynomial]\label{Def:Homogeneous_polynomial}
A mapping $P:X\to Y$ is said to be an $m$-homogeneous polynomial if there exists an $m$-linear mapping
$A:X^m\to Y$ such that
\[
P(x)=Ax^m:=A(\underbrace{x,x,\ldots,x}_{m\ \text{times}})
\]
for every $x\in X$.
We shall denote by 
$\mathcal{P}^{m}(X;Y)$ the vector space of all continuous $m$-homogeneous polynomials from $X$ into $Y$. For each
$P\in\mathcal{P}^{m}(X;Y)$ we set
\[
\|P\|
=
\sup_{x\in X,\ \|x\|\leq 1}\|P(x)\|_Y.
\]
\end{definition}

The power-series definition of holomorphic mappings adopted below is equivalent to the one based on complex Fr\'echet differentiability used in \cite{cohen2016kolmogorov,Cohen_DeVore_2015}; see \cite[Theorem~13.16]{mujica2010complex} and \cite[Theorem 7]{mujica2006holomorphic}.

\begin{definition}[\bf Holomorphic mappings]\label{Def:Holomorphic_mappings}
Let $\mathcal{O}$ be an open subset of $X$. A mapping
$u: \mathcal{O}\to Y$ is said to be holomorphic if for any
$a\in \mathcal{O}$ there exist a ball $B_X(a;r)\subset \mathcal{O}$ and a sequence of
polynomials
$
P_{a,m}\in\mathcal{P}^{m}(X;Y)
$
such that
\begin{equation*}
u(x)=u(a)+\sum_{m=1}^{\infty}P_{a,m}(x-a)
\end{equation*}
uniformly for $x\in B_X(a;r)$. 
\end{definition}

\begin{lemma}[Corollary 7.4 from \cite{mujica2010complex}]\label{Le:Cauchy_inequality}
Let $\mathcal{O}$ be an open subset of $X$ and $u:\mathcal{O}\to Y$ be holomorphic as in Definition \ref{Def:Holomorphic_mappings}.
Then for any
$m\in\mathbb{N}$, $r>0$, and $z\in X$ with $a+B_{\mathbb C}(0,r) z\in \mathcal{O}$, we have the Cauchy inequality
\[
\left\|P_{a,m}(z)\right\|_Y
\leq
r^{-m}
\sup_{\zeta\in\mathbb{C}, |\zeta|=r}
\|u(a+\zeta z)\|_Y.
\]
\end{lemma}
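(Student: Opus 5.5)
The plan is to reduce the statement to the classical one-variable Cauchy estimate by restricting $u$ to the complex line through $a$ in direction $z$. Define $g(\zeta):=u(a+\zeta z)$ for $\zeta$ in the disc $D:=B_{\mathbb C}(0,r)$. By assumption $a+Dz\subset\mathcal O$, so $g:D\to Y$ is well defined. The sup on the right-hand side refers to the circle $|\zeta|=r$, so I will read the hypothesis as also requiring $a+\zeta z\in\mathcal O$ for $|\zeta|=r$, i.e. the closed disc is mapped into $\mathcal O$. Otherwise I will prove the bound with $r$ replaced by every $\rho<r$ and pass to the limit, with the supremum understood as a $\limsup$.

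\textbf{Step 1: $g$ is holomorphic on $D$.} For each $\zeta_0\in D$, apply Definition~\ref{Def:Holomorphic_mappings} at the point $b=a+\zeta_0 z$. It gives a ball $B_X(b;r_b)$ and polynomials $P_{b,m}$ with
\[
u(x)=u(b)+\sum_{m\ge1}P_{b,m}(x-b)
\]
uniformly on that ball. Substituting $x=b+(\zeta-\zeta_0)z$ and using $m$-homogeneity, $P_{b,m}((\zeta-\zeta_0)z)=(\zeta-\zeta_0)^mP_{b,m}(z)$. Hence $g$ is given near $\zeta_0$ by a uniformly convergent $Y$-valued power series in $\zeta-\zeta_0$, so $g$ is holomorphic on $D$. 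At $\zeta_0=0$ this computation shows that $P_{a,m}(z)$ is exactly the $m$-th Taylor coefficient of $g$ at $0$.

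\textbf{Step 2: scalar reduction.} Fix $\varphi\in Y^*$ with $\|\varphi\|\le1$. Then $\varphi\circ g$ is a scalar holomorphic function on $D$, and its Taylor coefficients at $0$ are $\varphi(P_{a,m}(z))$, because a continuous linear functional passes through uniformly convergent series. For $0<\rho<r$, the classical Cauchy integral formula gives
\[
|\varphi(P_{a,m}(z))|\le \rho^{-m}\max_{|\zeta|=\rho}|\varphi(g(\zeta))|\le \rho^{-m}\max_{|\zeta|=\rho}\|u(a+\zeta z)\|_Y .
\]
Taking the supremum over such $\varphi$ and using Hahn--Banach yields
\[
\|P_{a,m}(z)\|_Y\le \rho^{-m}\sup_{|\zeta|=\rho}\|u(a+\zeta z)\|_Y .
\]

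\textbf{Step 3: pass from $\rho$ to $r$.} This is the step I expect to need the most care. Under the closed-disc reading, $\varphi\circ g$ is holomorphic on a neighbourhood of $\overline D$, so the maximum modulus principle gives $\max_{|\zeta|=\rho}|\varphi\circ g|\le\sup_{|\zeta|=r}|\varphi\circ g|$. Alternatively one can apply Step 2 directly with $\rho=r$, since $g$ is then holomorphic on a slightly larger disc by Step 1 and openness of $\mathcal O$ together with compactness of the circle. Either way we obtain the stated inequality with $r^{-m}$.

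The genuine subtleties are the identification of $P_{a,m}(z)$ with the Taylor coefficient of $g$, which follows from uniqueness of one-variable power series coefficients after applying functionals, and the boundary issue just discussed. Everything else is the standard scalar Cauchy estimate.
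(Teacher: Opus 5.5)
Your argument is correct and is essentially the standard proof behind the cited Corollary~7.4 of Mujica; the paper itself only cites that result. You restrict $u$ to the complex line $\zeta\mapsto u(a+\zeta z)$, identify $P_{a,m}(z)$ with its $m$-th Taylor coefficient via homogeneity, and apply the one-variable Cauchy estimate, with your Hahn--Banach reduction standing in for the vector-valued Cauchy integral formula. Your closed-disc reading of the hypothesis is the right one, since otherwise the supremum over $|\zeta|=r$ need not be defined. It is also all the paper needs: in the proof of Lemma~\ref{lem:taylor-bound} one has $r=\rho$ and $\|h\|_X\le 1$, so $a+\zeta h\in B_X(a,2\rho)\subset\mathcal O$ for $|\zeta|\le\rho$.
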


\begin{lemma}[Theorem 2.2 from \cite{mujica2010complex}]
\label{Le:homogeneous_polynomial_normbound}
For any $m$-linear mapping
$A:X^m\to Y$, if we let 
$P:X\to Y$ be the $m$-homogeneous polynomial defined by $P(x)=Ax^m$.
Then it holds that
$$
\|A\|_{\mathcal{L}^m(X,Y)}:=\sup_{\|x_i\|\leq1,\, i=1, \ldots, m} \|A(x_1,\ldots,x_m)\|_Y
\leq
\frac{m^m}{m!}\|P\|
.$$
\end{lemma}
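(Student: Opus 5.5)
The plan is to prove the bound through the polarization formula. One caveat comes first. The polynomial $P$ determines only the symmetrization of $A$: for instance, $A(x_1,x_2)=B(x_1,x_2)-B(x_2,x_1)$ gives $P\equiv0$ but may have $A\ne0$. So the inequality, as in \cite[Theorem~2.2]{mujica2010complex}, is understood for \emph{symmetric} $A$. For a general $A$, we replace it by its symmetrization
\[
A_s(x_1,\dots,x_m)=\frac{1}{m!}\sum_{\sigma}A(x_{\sigma(1)},\dots,x_{\sigma(m)}),
\]
which satisfies $A_s x^m=Ax^m=P(x)$, and prove the bound for $A_s$. We therefore assume $A$ is symmetric.

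\textbf{Step 1 (polarization identity).} We show that for all $x_1,\dots,x_m\in X$,
\[
A(x_1,\dots,x_m)=\frac{1}{m!\,2^m}\sum_{\varepsilon\in\{\pm1\}^m}\varepsilon_1\cdots\varepsilon_m\,P\Bigl(\sum_{i=1}^m\varepsilon_i x_i\Bigr).
\]
Expanding by multilinearity gives
\[
P\Bigl(\sum_i\varepsilon_i x_i\Bigr)=\sum_{j_1,\dots,j_m=1}^m \varepsilon_{j_1}\cdots\varepsilon_{j_m}\,A(x_{j_1},\dots,x_{j_m}).
\]
We multiply by $\varepsilon_1\cdots\varepsilon_m$ and average over $\varepsilon$. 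Since $\mathbb E[\varepsilon_i^k]=0$ for odd $k$ and $=1$ for even $k$, a term survives only if every index $i$ occurs an odd number of times among $j_1,\dots,j_m$. As there are $m$ slots and $m$ indices, this forces each index to occur exactly once, so $(j_1,\dots,j_m)$ is a permutation of $(1,\dots,m)$. There are $m!$ such terms, and by symmetry each equals $A(x_1,\dots,x_m)$. This proves the identity.

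\textbf{Step 2 (estimate).} Let $\|x_i\|\le1$ for each $i$. Then $\|\sum_i\varepsilon_i x_i\|\le m$, so homogeneity gives
\[
\Bigl\|P\Bigl(\sum_i\varepsilon_i x_i\Bigr)\Bigr\|_Y\le m^m\|P\|.
\]
The identity has $2^m$ terms, so the triangle inequality yields
\[
\|A(x_1,\dots,x_m)\|_Y\le\frac{2^m m^m}{m!\,2^m}\|P\|=\frac{m^m}{m!}\|P\|.
\]
Taking the supremum over such $x_i$ gives the claim.

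The only delicate point is the combinatorial bookkeeping in Step 1, namely checking that exactly the $m!$ permutation terms survive the averaging. This is routine once the parity argument is written out.
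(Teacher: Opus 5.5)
Your proof is correct and follows the same route as the result the paper cites from \cite{mujica2010complex} (the paper itself gives no proof beyond the citation): the sign-polarization formula, then the homogeneity bound $\|P(\sum_i\varepsilon_i x_i)\|_Y\le m^m\|P\|$, then the triangle inequality. Your caveat is also right: as literally stated for arbitrary $m$-linear $A$ the inequality fails (e.g.\ a nonzero antisymmetric bilinear $A$ gives $P\equiv0$), so it must be read for symmetric $A$, which is the only case used in Lemma~\ref{lem:taylor-bound}, where the $A_{a,m}$ are symmetric.
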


\subsection{An orthogonal-spike estimate for Kolmogorov widths}

The computation of lower bound of the two Kolmogorov $n$-widths appearing Theorems \ref{thm:main_result2} and \ref{thm:main_result4}
will finally reduce to the following elementary lemma in Hilbert spaces.
\begin{lemma}
\label{lem:orthogonal-spikes}
Let $H$ be a Hilbert space, let $(e_k)_{\{k\geq 1\}}$ be an
orthonormal basis of $H$, and let $(c_k)_{\{k\geq 1\}}$ be a positive
nonincreasing sequence converging to zero.
For $
\Sigma:=\{0\}\cup\{c_k e_k:k\geq1\}$,
one has
\begin{equation*}
\frac{c_{2n}}{\sqrt2}
\leq
d_n(\Sigma)_H
\leq
c_{n+1},
\qquad n\geq1.
\end{equation*}
\end{lemma}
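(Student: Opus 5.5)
The upper bound comes from exhibiting a good subspace, and the lower bound from a volume/projection argument applied to a well-chosen finite subfamily of spikes.

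\emph{Upper bound.} Take $V_n:=\spann\{e_1,\ldots,e_n\}$. The point $0$ lies in $V_n$, and so does $c_ke_k$ for $k\le n$. For $k\ge n+1$ we have $\dist(c_ke_k,V_n)_H=c_k\le c_{n+1}$, using that $(c_k)$ is nonincreasing. Hence $d_n(\Sigma)_H\le c_{n+1}$.

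\emph{Lower bound.} Fix any subspace $V\subset H$ with $\dim V\le n$, and let $P$ be the orthogonal projection onto $V$. The aim is to show
\[
\max_{1\le k\le 2n}\dist(c_ke_k,V)_H\ge c_{2n}/\sqrt2 .
\]
Since $(c_k)$ is nonincreasing, it suffices to find some $k\le 2n$ with $\|e_k-Pe_k\|_H^2\ge 1/2$. Then
\[
\dist(c_ke_k,V)_H=c_k\|e_k-Pe_k\|_H\ge c_{2n}/\sqrt2 .
\]

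To find such a $k$, use the trace identity. Because $P$ is an orthogonal projection of rank at most $n$,
\[
\sum_{k=1}^{2n}\|Pe_k\|_H^2=\sum_{k=1}^{2n}\langle Pe_k,e_k\rangle\le\operatorname{tr}P=\dim V\le n .
\]
Indeed, $\sum_{k\ge1}\|Pe_k\|^2=\|P\|_{HS}^2=\operatorname{rank}P$, and restricting the sum to $k\le 2n$ only decreases it.

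It follows that some $k\le 2n$ satisfies $\|Pe_k\|^2\le 1/2$. For that $k$, Pythagoras gives $\|e_k-Pe_k\|^2=1-\|Pe_k\|^2\ge 1/2$. Taking the infimum over all $V$ with $\dim V\le n$ yields $d_n(\Sigma)_H\ge c_{2n}/\sqrt2$.

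The only step that needs care is the trace bound. It can be justified concretely by choosing an orthonormal basis $v_1,\dots,v_m$ of $V$ with $m\le n$. Then
\[
\sum_k\|Pe_k\|^2=\sum_k\sum_j|\langle e_k,v_j\rangle|^2=\sum_j\|v_j\|^2=m
\]
by Parseval. Compactness of $\Sigma$, which is needed for $d_n$ to be defined, follows from $c_k\to0$.
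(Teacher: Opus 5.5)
Your proof is correct and matches the paper's argument. The upper bound uses the same subspace $\spann\{e_1,\ldots,e_n\}$, and the lower bound uses the same averaging estimate $\sum_{k=1}^{2n}\|P_V e_k\|_H^2\le \dim V\le n$ (the paper via Bessel's inequality, you via Parseval and the trace of $P_V$) to produce some $k\le 2n$ with $\dist(e_k,V)_H\ge 2^{-1/2}$.
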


\begin{proof}
For the upper bound, choose
$
V_n:=\spann\{e_1,\ldots,e_n\}.
$
Then $c_k e_k\in V_n$ for $k\leq n$, while for $k>n$,
$$
\dist(c_k e_k,V_n)_H
=
c_k
\leq
c_{n+1}.
$$
Hence
$
d_n(\Sigma)_H\leq c_{n+1}.
$ 

For the lower bound, let $V\subset H$ satisfy $\dim V\leq n$, and let
$P_V$ denote the orthogonal projection onto $V$. If
$(v_\ell)_{\{1\leq\ell\leq\dim V\}}$ is an orthonormal basis of $V$,
then Bessel's inequality yields
$$
\begin{aligned}
\sum_{k=1}^{2n}\|P_Ve_k\|_H^2
=
\sum_{\ell=1}^{\dim V}
\sum_{k=1}^{2n}
|\langle e_k,v_\ell\rangle_H|^2\leq
\dim V
\leq n.
\end{aligned}
$$
Therefore, we have 
$$
\sum_{k=1}^{2n}\dist(e_k,V)_H^2
=
2n-\sum_{k=1}^{2n}\|P_Ve_k\|_H^2
\geq n.
$$
Hence there exists $k\in\{1,\ldots,2n\}$ such that
$
\dist(e_k,V)_H\geq 2^{-1/2}.
$
Since $c_k\geq c_{2n}$,
$$
\sup_{z\in\Sigma}\dist(z,V)_H
\geq
\dist(c_k e_k,V)_H
=
c_k\dist(e_k,V)_H
\geq
\frac{c_{2n}}{\sqrt2}.
$$
Taking the infimum over all subspaces $V\subset H$ with
$\dim V\leq n$ yields the lower bound.
\end{proof}
\section{Proof of Theorem \ref{thm:main_result1}}\label{Sec:proof_thm1}
Since $\mathcal{K}\subset \mathcal{O}$ is compact, there exists $\rho>0$ sufficiently small such that
\[
\{z\in X:\operatorname{dist}(z,\mathcal{K})_X<2\rho\}\subset \mathcal{O}.
\]
Consequently, $B_X(a,2\rho)\subset \mathcal{O}$ for every $a\in\mathcal{K}$.
Throughout this section, we assume that $u$ satisfies the assumptions of Theorem \ref{thm:main_result1}.
\subsection{Multilinear Taylor expansions of holomorphic mappings}

\begin{lemma}
\label{lem:taylor-bound}
For every $a\in \mathcal{K}$, there exists continuous and symmetric $m$-linear mappings $A_{a,m}:X^m\to Y$ such that
\begin{equation*}
u(a+h)=u(a)+\sum_{m=1}^{\infty}A_{a,m}(h,\ldots,h),
\end{equation*}
for any $h\in X$ with $\|h\|_X<2\rho$. In addition, it holds that for any $a\in \mathcal{K}$,
\begin{equation}\label{eq:multilinear-cauchy}
\|A_{a,m}\|_{\mathcal{L}^m(X;Y)}
 \le C\left(\frac{\ee}{\rho}\right)^m,
 \qquad m\ge1.
\end{equation}
\end{lemma}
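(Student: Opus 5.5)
The plan is to take the local power series at $a$ from Definition~\ref{Def:Holomorphic_mappings}, bound its homogeneous terms with Lemma~\ref{Le:Cauchy_inequality}, pass to symmetric multilinear maps using Lemma~\ref{Le:homogeneous_polynomial_normbound}, and finally extend the expansion to the whole ball $B_X(a,2\rho)$ by a one-variable argument along complex lines.

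\emph{Coefficients.} Fix $a\in\mathcal K$ and let $P_{a,m}\in\mathcal P^m(X;Y)$ be the polynomials from Definition~\ref{Def:Holomorphic_mappings}, so the expansion holds on some small ball $B_X(a,r_0)$. Each $P_{a,m}$ is of the form $B x^m$ for an $m$-linear $B$. Replacing $B$ by its symmetrization
\[
A_{a,m}(x_1,\ldots,x_m):=\frac{1}{m!}\sum_{\sigma}B(x_{\sigma(1)},\ldots,x_{\sigma(m)})
\]
does not change the diagonal, so $A_{a,m}h^m=P_{a,m}(h)$. Continuity of $A_{a,m}$ will follow from the bound below, since a multilinear map with finite norm is continuous.

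\emph{Cauchy and polarization bounds.} Take $z\in X$ with $\|z\|_X\le1$ and $0<r<2\rho$. For $|\zeta|\le r$ we have $a+\zeta z\in B_X(a,2\rho)\subset\mathcal O$, so Lemma~\ref{Le:Cauchy_inequality} together with $\sup_{\mathcal O}\|u\|_Y\le C$ gives $\|P_{a,m}(z)\|_Y\le Cr^{-m}$. Letting $r\uparrow2\rho$ yields $\|P_{a,m}\|\le C(2\rho)^{-m}$. Lemma~\ref{Le:homogeneous_polynomial_normbound} and the elementary inequality $m^m/m!\le\ee^m$ then give
\[
\|A_{a,m}\|_{\mathcal L^m(X;Y)}\le C\left(\frac{\ee}{2\rho}\right)^m\le C\left(\frac{\ee}{\rho}\right)^m .
\]
This is \eqref{eq:multilinear-cauchy}, with a factor $2^{-m}$ to spare.

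\emph{Validity on the whole ball (main step).} Fix $h$ with $0<\|h\|_X<2\rho$. The map $f(\zeta):=u(a+\zeta h)$ is a $Y$-valued holomorphic function on the disc $|\zeta|<R:=2\rho/\|h\|_X$, and $R>1$. For small $|\zeta|$, the local expansion and homogeneity give
\[
f(\zeta)=u(a)+\sum_{m\ge1}\zeta^m P_{a,m}(h).
\]
Hence the $P_{a,m}(h)$ are the Taylor coefficients of $f$ at $0$. By the one-variable Cauchy--Taylor theorem for Banach-space-valued holomorphic functions (which can be reduced to the scalar case by applying functionals and using Hahn--Banach), the Taylor series of $f$ converges to $f$ on the whole disc of radius $R$. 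Setting $\zeta=1$ gives
\[
u(a+h)=u(a)+\sum_{m\ge1}A_{a,m}(h,\ldots,h).
\]
Absolute convergence is also visible directly from $\|A_{a,m}h^m\|_Y\le C(\|h\|_X/2\rho)^m$.

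The main point needing care is this last step: the definition only provides the expansion on some small, uncontrolled radius $r_0$, and the uniform radius $2\rho$ is obtained by restricting $u$ to complex lines through $a$.
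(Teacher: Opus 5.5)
Your proposal is correct and follows the paper's route for the coefficient estimate: the Cauchy inequality of Lemma~\ref{Le:Cauchy_inequality}, the polarization bound of Lemma~\ref{Le:homogeneous_polynomial_normbound} applied to the symmetrization, and $m!\ge(m/\ee)^m$; the only difference there is that you let $r\uparrow 2\rho$ where the paper takes $r=\rho$, which merely improves the constant. Your restriction to complex lines $\zeta\mapsto u(a+\zeta h)$, combined with the one-variable Taylor theorem and the bound $\|P_{a,m}(h)\|_Y\le C\bigl(\|h\|_X/(2\rho)\bigr)^m$, is a genuine improvement: the paper says the expansion on all of $B_X(a,2\rho)$ follows immediately from Definition~\ref{Def:Holomorphic_mappings}, but that definition only provides it on some small, uncontrolled ball, so your argument supplies the missing justification.
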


\begin{proof}
The first assertion follows immediately from Definition \ref{Def:Holomorphic_mappings}. The symmetry of $A_{a,m}$ follows from
\cite[Theorem~2.2]{mujica2010complex}. Let  
$P_{a,m}(h)=A_{a,m}(h,\ldots,h)$. By combining Lemma \ref{Le:Cauchy_inequality} with $r=\rho$ and using the fact that $u$ is bounded by $C$, we obtain 
\begin{equation}\label{Eq:polynomial-bound}
 \|P_{a,m}\|_Y=\sup_{\|h\|_X\leq1}\|P_{a,m}(h)\|_Y\le C\rho^{-m} 
\end{equation}
Combining Lemma \ref{Le:homogeneous_polynomial_normbound} with \eqref{Eq:polynomial-bound} and the inequality $m!\geq (m/\ee)^m$ yields \eqref{eq:multilinear-cauchy}.  
\end{proof}

\subsection{Kolmogorov $n$-width upper bounds in an abstract framework}

The following lemma is a new block $n$-term truncation technique that replaces the scalar best $n$-term truncation used in \cite{cohen2016kolmogorov} and \cite{Cohen_DeVore_2015}. It is the key mechanism behind the improved decay exponent.

\begin{lemma}\label{Le:block-truncation_algebraic_width}
Let $0<p<1$ and let $\mathcal{F}\subset Y$. Suppose that there exist a countable index set 
$\mathcal{I}$, finite-dimensional subspaces $Y_\lambda\subset Y$ with
$
1\leq \dim Y_\lambda\le D_\lambda,
$
and numbers $a_\lambda\ge0$ for each $\lambda\in\mathcal{I}$, together with a finite-dimensional subspace
$Y_0\subset Y$ satisfying
$
\dim Y_0\le d_0,
$
such that
\begin{enumerate}[label=(\roman*),leftmargin=2.4em]
\item Every $f\in\mathcal{F}$ admits a representation
$$
f=f_0+\sum_{\lambda\in\mathcal{I}}f_\lambda,
$$
where $f_0\in Y_0$ and $f_\lambda\in Y_\lambda$, and the series converges absolutely and uniformly with respect to $f\in\mathcal{F}$;

\item 
$
\|f_\lambda\|_Y\le a_\lambda
$ for every $\lambda\in\mathcal{I}$;

\item $
S_p:=\sum_{\lambda\in\mathcal{I}}D_\lambda^{1-p}a_\lambda^p<\infty.
$
\end{enumerate}
Then for any integer $n>d_0$, we have 
$$
d_n(\mathcal{F})_Y
\le
S_p^{1/p}(n-d_0)^{-(1-p)/p}.
$$
\end{lemma}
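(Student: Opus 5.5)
We prove the bound with a thresholding (greedy block selection) argument, the block analogue of Stechkin's lemma. Fix $n>d_0$ and set $N:=n-d_0\ge1$. For a threshold $\tau>0$, to be fixed below, let
$$
\Lambda_\tau:=\{\lambda\in\mathcal I:\ a_\lambda>\tau D_\lambda\}.
$$
As the approximation space we take
$$
V:=Y_0+\sum_{\lambda\in\Lambda_\tau}Y_\lambda ,
\qquad
\dim V\le d_0+\sum_{\lambda\in\Lambda_\tau}D_\lambda .
$$
For every $f\in\mathcal F$, the element $f_0+\sum_{\lambda\in\Lambda_\tau}f_\lambda$ lies in $V$. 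By the absolute convergence in (i) and the bound (ii),
$$
\dist(f,V)_Y\le\sum_{\lambda\notin\Lambda_\tau}a_\lambda .
$$
The point is that each block $\lambda$ costs $D_\lambda$ dimensions. We therefore threshold the ``per-dimension size'' $a_\lambda/D_\lambda$ rather than $a_\lambda$ itself. This is exactly what makes the weight $D_\lambda^{1-p}a_\lambda^p$ appear in (iii).

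Next we bound the dimension count. For $\lambda\in\Lambda_\tau$ we have $a_\lambda/(\tau D_\lambda)>1$, hence
$$
D_\lambda\le D_\lambda\Bigl(\frac{a_\lambda}{\tau D_\lambda}\Bigr)^p=\tau^{-p}D_\lambda^{1-p}a_\lambda^p .
$$
Summing over $\Lambda_\tau$ gives $\sum_{\lambda\in\Lambda_\tau}D_\lambda\le\tau^{-p}S_p$. In particular $\Lambda_\tau$ is finite. Then we bound the tail. For $\lambda\notin\Lambda_\tau$ we have $a_\lambda\le\tau D_\lambda$. Since $1-p>0$,
$$
a_\lambda=a_\lambda^p a_\lambda^{1-p}\le a_\lambda^p(\tau D_\lambda)^{1-p}.
$$
Summing, $\sum_{\lambda\notin\Lambda_\tau}a_\lambda\le\tau^{1-p}S_p$.

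Finally we choose $\tau$. If $S_p=0$, every $a_\lambda$ with $D_\lambda\ge1$ vanishes, so $\mathcal F\subset Y_0$ and the claim is trivial. Otherwise set $\tau:=(S_p/N)^{1/p}$. This gives $\tau^{-p}S_p=N$, so $\dim V\le d_0+N=n$. The error bound becomes
$$
\tau^{1-p}S_p=S_p^{(1-p)/p}\,N^{-(1-p)/p}\,S_p=S_p^{1/p}N^{-(1-p)/p},
$$
which is the claimed estimate. Since $V$ is one fixed subspace that works uniformly for all $f\in\mathcal F$, taking the supremum over $f$ yields $d_n(\mathcal F)_Y\le S_p^{1/p}(n-d_0)^{-(1-p)/p}$.

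The only delicate point is choosing the threshold as a ratio relative to $D_\lambda$. Thresholding on $a_\lambda$ alone would control neither the number of dimensions used nor the tail with the mixed weight $D_\lambda^{1-p}a_\lambda^p$. Everything else is routine. We do not even need $\dim Y_\lambda=D_\lambda$; only $\dim Y_\lambda\le D_\lambda$ is used.
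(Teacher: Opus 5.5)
Your proposal is correct and follows the paper's proof essentially line for line. The paper also keeps exactly the blocks with $a_\lambda/D_\lambda>\delta$, where $\delta=(S_p/(n-d_0))^{1/p}$, bounds the retained dimension by $\delta^{-p}S_p$ and the discarded tail by $\delta^{1-p}S_p$, and handles $S_p=0$ separately as trivial.
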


\begin{proof}
The case $S_p=0$ is immediate, and hence we assume that $S_p>0$. Let $\delta=\left(\frac{S_p}{n-d_0}\right)^{1/p}>0$, retain the blocks satisfying $a_\lambda/D_\lambda>\delta$.
The sum of their dimensions is bounded by
\begin{equation}\label{eq:retained-dimension}
\sum_{a_\lambda/D_\lambda>\delta}D_\lambda
 \le \delta^{-p}
 \sum_{\lambda\in\mathcal{I}}D_\lambda^{1-p}a_\lambda^p
 =\delta^{-p}S_p.
\end{equation}
For the discarded blocks, we have
\begin{equation}\label{eq:discarded_error}
\sum_{a_\lambda/D_\lambda\le\delta}a_\lambda=\sum_{a_\lambda/D_\lambda\le\delta}
 D_\lambda^{1-p}a_\lambda^p
 \left(\frac{a_\lambda}{D_\lambda}\right)^{1-p}
 \le \delta^{1-p}S_p.
\end{equation}
Let $Y_\delta$ be the sum of $Y_0$ and all retained $Y_\lambda$:
\[
Y_\delta = Y_0+\sum_{a_\lambda/D_\lambda>\delta} Y_\lambda.
\]
The absolute convergence assumption makes the truncated sum legitimate, and \eqref{eq:retained-dimension} gives
\[
 \dim Y_\delta\le d_0+\delta^{-p}S_p=n.
\]
It then follows from \eqref{eq:discarded_error} that 
\[
 d_n(\mathcal{F})_Y
\le\sup_{f\in\mathcal{F}}\dist(f,Y_\delta)_Y
 \le \delta^{1-p}S_p
 =S_p^{1/p}(n-d_0)^{-(1-p)/p}.
\]
The proof is complete.
\end{proof}

\subsection{Proof of Theorem \ref{thm:main_result1}}

Fix $0<t<s$
 and let $
 p:=1/(1+t)\in(0,1)$.
Then we have
\[
\frac{1-p}{p}=t,\qquad
 \gamma:=sp-(1-p)=p(s-t)>0.
\]

\noindent\textbf{Step 1: Dyadic approximation.} Let $C_A:=1+\sup_{n\ge1}n^s d_n(\mathcal{K})_X<\infty$.
By the definition of Kolmogorov $n$-width, for any $k\in \mathbb N$, there exists a subspace $\widetilde V_k\subset X$ with $ \dim\widetilde V_k\le2^k$ such that
$$
\sup_{x\in \mathcal{K}}\inf_{v\in \widetilde V_k}\|x-v\|_X
\le C_A2^{-sk}.
$$
Define $V_k:=\widetilde V_0+\cdots+\widetilde V_k$ for all $k\in \mathbb N$. Then
$
V_0\subset V_1\subset V_2\subset\cdots\subset X,
$
and $\widetilde V_k\subset V_k$. Since enlarging the approximation space does not increase the distance, we obtain 
$$
\sup_{x\in\mathcal{K}}\inf_{v\in V_k}\|x-v\|_X
\le C_A2^{-sk}.
$$
Moreover, $\dim V_k\le2^{k+1}.
$ For each $x\in\mathcal{K}$ we choose a best approximant
$\pi_k(x)\in V_k$ such that
\begin{equation}\label{Eq:bestapproximation_pi}
\|x-\pi_k(x)\|_X
=
\operatorname{dist}(x,V_k)_X
\leq C_A2^{-sk}.
\end{equation}

\vspace{2mm}
\noindent\textbf{Step 2: Localized decomposition into blocks.} For any $J\in\mathbb{N}$ and any $a,x\in\mathcal{K}$ satisfying
\begin{equation}\label{eq:local-close}
\|x-a\|_X\leq 2^{-sJ},
\end{equation}
we write 
\begin{equation}\label{eq:gr}
g_r(x,a):=\begin{cases}
\pi_J(x)-\pi_J(a),&\quad r=0,\\    
\bigl(\pi_{J+r}(x)-\pi_{J+r-1}(x)\bigr)-\bigl(\pi_{J+r}(a)-\pi_{J+r-1}(a)\bigr),&\quad r\geq 1.
\end{cases}
\end{equation}
By the nestedness of the spaces $V_k$, we have $g_r(x,a)\in V_{J+r}$. Since best approximants need not be unique, for each fixed pair $a,x\in\mathcal{K}$ we choose the required best approximants and define $g_r(x,a)$ accordingly. Thus, once these choices have been made and $J$ is fixed, $g_r(x,a)$ denotes a fixed element of $V_{J+r}$. It follows from \eqref{Eq:bestapproximation_pi} that the corresponding telescoping series converges in $X$ and
yields the representation
\begin{equation*}
x-a=\sum_{r=0}^{\infty}g_r(x,a)
\qquad\text{in }X.
\end{equation*}
Combining \eqref{Eq:bestapproximation_pi} and \eqref{eq:local-close}
with the definition \eqref{eq:gr}, we obtain
\begin{equation*}
\|g_r(x,a)\|_X
\leq C_G 2^{-s(J+r)},
\qquad \text{for all }r\in\mathbb{N},
\end{equation*}
where $
C_G:=\max\bigl\{1+2C_A,\;2C_A(1+2^s)\bigr\}$.
Consequently, the series is absolutely convergent in $X$.

\vspace{2mm}
\noindent\textbf{Step 3: Upper bounds of Kolmogorov $n$-width.} For all $r\in \mathbb N$, set
\begin{align*}
  \beta_r&:=\frac{\ee C_G}{\rho}\cdot 2^{-s(J+r)},\qquad
 d_r:=2^{J+r+1},\\
 \Theta_J&:=\sum_{r=0}^{\infty}\beta_r=\frac{\ee C_G 2^{-sJ}}{\rho(1-2^{-s})},\qquad Q_J:=\sum_{r=0}^{\infty}d_r^{1-p}\beta_r^p=\frac{2^{1-p}(\ee C_G/\rho)^p 2^{-\gamma J}}{1-2^{-\gamma}}.
\end{align*}
Since $\gamma=p(s-t)>0$, we choose $J$ sufficiently large so that
$2^{-sJ}<2\rho$, $\Theta_J<1$, and $Q_J<1$.
By the compactness of $\mathcal{K}$, there exist finitely many points
$a_1,\ldots,a_M\in\mathcal{K}$ such that
\begin{equation}\label{eq:finite-cover}
\mathcal{K}
\subset
\bigcup_{i=1}^M B_X\bigl(a_i,2^{-sJ}\bigr)\subset \mathcal{O}.
\end{equation}
For each $x\in\mathcal{K}$, choose an index $i=i(x)\in\{1,\ldots,M\}$ such that
$x\in B_X\bigl(a_i,2^{-sJ}\bigr)$.
Applying the construction in \textbf{Step 2} with $a=a_i$ and setting
$g_{i,r}(x):=g_r(x,a_i)$, we obtain
\begin{equation}\label{eq:x_ai_patch-blocks}
x-a_i=\sum_{r=0}^{\infty}g_{i,r}(x),
\end{equation}
where $
g_{i,r}(x)\in V_{J+r}$ and 
\[
\|g_{i,r}(x)\|_X
\leq C_G2^{-s(J+r)}.
\]
By the definition of $\Theta_J$, we have
\begin{equation*}
\frac{\ee}{\rho}\sum_{r=0}^{\infty}\|g_{i,r}(x)\|_X
\leq \Theta_J<1.
\end{equation*}

Let $A_{i,m}:=A_{a_i,m}$ be the multilinear Taylor coefficient from Lemma~\ref{lem:taylor-bound}.  For $m\ge1$ and an ordered multi-index
$
 \mathbf r=(r_1,\ldots,r_m)\in\mathbb N^m,
$
and $x\in\mathcal{K}$, let $i=i(x)$ be the index selected above and define
\begin{equation*}
 F_{i,m,\mathbf r}(x)
 :=A_{i,m}\bigl(g_{i,r_1}(x),\ldots,g_{i,r_m}(x)\bigr).
\end{equation*}
By multilinearity, the decomposition \eqref{eq:x_ai_patch-blocks}, and the
multilinear Taylor estimate, we obtain the expansion
\begin{equation}\label{eq:full-block-expansion}
u(x)
=
u(a_i)
+
\sum_{m=1}^{\infty}
\sum_{\mathbf r\in\mathbb{N}^m}
F_{i,m,\mathbf r}(x),
\end{equation}
which converges absolutely in $Y$, uniformly with respect to
$x\in\mathcal{K}$. Indeed, by \eqref{eq:multilinear-cauchy}, we have
\begin{equation*}
 \sum_{m=1}^{\infty}
 \sum_{\mathbf r\in\Nzero^m}
 \|F_{i,m,\mathbf r}(x)\|_Y
\le
 C\sum_{m=1}^{\infty}
 \left(
 \frac{\ee}{\rho}
 \sum_{r=0}^{\infty}\|g_{i,r}(x)\|_X
 \right)^m
     \le C\sum_{m=1}^{\infty}\Theta_J^m<\infty\quad \text{for any }x\in \mathcal{K}.
\end{equation*}

For any $j\in \{1,2,\ldots,M\}$, $m\geq 1$, and $
 \mathbf r=(r_1,\ldots,r_m)\in\mathbb N^m
$, define 
\begin{align*}
 W_{j,m,\mathbf r}
 &:=\spann\Bigl\{
 A_{j,m}(v_1,\ldots,v_m):
 v_k\in V_{J+r_k},\, 1\leq k\leq m
 \Bigr\}\subset Y.
\end{align*}
The multilinearity of each $A_{j,m}$ shows that
\begin{equation} \label{eq:block-dimension}
 \dim W_{j,m,\mathbf r}
 \le\prod_{k=1}^m\dim V_{J+r_k}
 \le D_{m,\mathbf r}:=
\prod_{j=1}^m d_{r_j}.
\end{equation}
Moreover, \eqref{eq:multilinear-cauchy} and \eqref{eq:x_ai_patch-blocks} give
$$
\begin{aligned}
\|F_{i,m,\mathbf r}(x)\|_Y
&=\|A_{i,m}\bigl(g_{i,r_1}(x),\ldots,g_{i,r_m}(x)\bigr)\|_Y\\
 &\leq \|A_{i,m}\|_Y\prod_{j=1}^m \|g_{j,r_j}(x)\|_X\le a_{m,\mathbf r}:=
C\prod_{j=1}^m\beta_{r_j}.
 \label{eq:block-amplitude}
\end{aligned}
$$
We now verify the weighted summability condition in
Lemma~\ref{Le:block-truncation_algebraic_width}. Using \eqref{eq:block-dimension} and \eqref{eq:block-amplitude}, we obtain
$$
\begin{aligned}
 S_p
 &:=\sum_{i=1}^M
 \sum_{m=1}^{\infty}
 \sum_{\mathbf r\in\mathbb N^m}
 D_{m,\mathbf r}^{1-p}a_{m,\mathbf r}^p
= MC^p
 \sum_{m=1}^{\infty}
 \sum_{r_1,\ldots,r_m\ge0}
 \prod_{j=1}^m\bigl(d_{r_j}^{1-p}\beta_{r_j}^p\bigr)
 \\
 &=MC^p\sum_{m=1}^{\infty}Q_J^m
 =MC^p\frac{Q_J}{1-Q_J}
 <\infty.
\end{aligned}
$$
Apply Lemma~\ref{Le:block-truncation_algebraic_width} to $\mathcal{F}=u(\mathcal{K})$, with
\[
 Y_0:=\spann\{u(a_1),\ldots,u(a_M)\},
 \qquad d_0:=M,
\]
and with blocks indexed by $(j,m,\mathbf r)$.  For each fixed $x\in\mathcal{K}$, only the blocks associated with the selected
patch $i(x)$ appear in the expansion \eqref{eq:full-block-expansion}. This is
admissible in Lemma~\ref{Le:block-truncation_algebraic_width}, since all blocks corresponding
to the remaining patches may simply be taken to be zero.  Applying Lemma~\ref{Le:block-truncation_algebraic_width}, we obtain, for every $n>M$,
\begin{equation}\label{eq:rate-n-minus-M}
d_n\bigl(u(\mathcal{K})\bigr)_Y
\leq
S_p^{1/p}(n-M)^{-(1-p)/p}
=
S_p^{1/p}(n-M)^{-t}.
\end{equation}
If $n\geq 2M$, then $n-M\geq n/2$, and hence
$$
d_n\bigl(u(\mathcal{K})\bigr)_Y
\leq
2^tS_p^{1/p}n^{-t}.
$$
For the finitely many indices $1\leq n<2M$, we use
$d_n(u(\mathcal{K}))_Y\leq B$ and enlarge the constant accordingly.
Therefore, there exists a constant $C_t>0$ such that
$$
d_n\bigl(u(\mathcal{K})\bigr)_Y
\leq C_tn^{-t},
\qquad n\geq1.
$$
This completes the proof of Theorem~\ref{thm:main_result1}.
\begin{remark}

The improvement in the decay exponent can be seen from comparing the scalar and block summability conditions for elements in $\mathcal{K}$.
In the scalar coordinate framework \cite{cohen2016kolmogorov}, a subspace of dimension $D_k \asymp 2^k$ at dyadic level $k$ is treated as $2^k$ separate directions, each of magnitude $2^{-sk}$. Scalar $\ell_p$ summability requires
$\sum_{k \ge 0} 2^k (2^{-sk})^p$ is finite, that is $sp > 1$,
which yields the restricted best $n$-term approximation rate $\frac{1}{p} - 1 < s - 1$.
In contrast, our block-wise argument preserves the intrinsic geometry of each finite-dimensional approximation space. A block with dimension $D_k \asymp 2^k$ and amplitude $a_k \asymp 2^{-sk}$ contributes
\[
D_k^{1-p} a_k^p \asymp 2^{k(1-p)} 2^{-skp} = 2^{-k[sp - (1-p)]}.
\]
Summability now requires only $sp > 1 - p$, which is equivalent to
$\frac{1-p}{p} < s$.
Consequently, every decay exponent $t < s$ is admissible, eliminating the rate loss caused by replacing finite-dimensional spaces with loose rectangular coefficient boxes.
    
\end{remark}

\section{Proof of Theorem \ref{thm:main_result2}}\label{Sec:proof_thm2}

We need the following lemma, whose assumption is reasonable since, for each $\delta>0$, only finitely many pairs satisfy $(ij)^{-s}\geq\delta$. The following result is a corollary of the
classical rearrangement estimate for tensor-product sequences; see
\cite[Theorem~2]{Krieg2018}. A proof is included  in Section \ref{Sec:rearrangement} for completeness.
\begin{lemma}[Rearrangement of the products]
\label{Le:rearrangement}
Let $(b_m)_{m\geq 1}$ be the nonincreasing rearrangement, counted with
multiplicity, of the family $\bigl((ij)^{-s}\bigr)_{1\leq i<j}$. Then there
exist constants $0<c_s<C_s<\infty$ such that
\begin{equation}\label{eq:b-asymptotic}
c_s\left(\frac{\log(m+1)}{m}\right)^s
\leq b_m
\leq C_s\left(\frac{\log(m+1)}{m}\right)^s,
\qquad m\geq 1.
\end{equation}
\end{lemma}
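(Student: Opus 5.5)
The plan is to reduce everything to a counting problem. For $\delta\in(0,1)$ let
\[
N(\delta):=\#\{(i,j):1\le i<j,\ ij\le \delta^{-1/s}\},
\]
which is the number of entries of the family satisfying $(ij)^{-s}\ge\delta$. Because $b_m$ is the nonincreasing rearrangement, we have $b_m\ge\delta$ if and only if $m\le N(\delta)$. So two-sided bounds on $N(\delta)$ in terms of $T:=\delta^{-1/s}$ will translate into bounds on $b_m$.

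\textbf{Step 1: the counting estimate.} First I would prove that there are constants $0<c<C$ with
\[
cT\log T\le N(\delta)\le CT\log T \qquad\text{for all } T\ge T_0 .
\]
For the upper bound, I count all ordered pairs $(i,j)$ with $ij\le T$:
\[
\sum_{i\le T}\lfloor T/i\rfloor\le T(1+\log T).
\]
For the lower bound, I restrict to $1\le i\le \sqrt{T}/2$ and count those $j$ with $i<j\le T/i$. There are at least $T/i-i-1\ge T/(2i)$ of them when $T$ is large. Summing over $i$ gives at least $\tfrac12 T\log(\sqrt T/2)-O(T)\gtrsim T\log T$.

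\textbf{Step 2: invert the estimate.} For the upper bound on $b_m$, fix $m$ and set $\delta:=b_m$, so that $m\le N(b_m)\le C T\log T$ with $T=b_m^{-1/s}$. Since $m\ge2$ forces $T\ge 2$, the function $T\mapsto T\log T$ is increasing in the relevant range. From $m\lesssim T\log T$ I get $\log m\lesssim \log T$, and hence $T\gtrsim m/\log T\gtrsim m/\log m$. This gives
\[
b_m=T^{-s}\lesssim (\log m/m)^s .
\]
For the lower bound, I take $T$ to be a suitable constant multiple of $m/\log(m+1)$, choose $\delta=T^{-s}$, and use $N(\delta)\ge cT\log T\ge m$ once the constant is chosen large enough. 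This forces $b_m\ge\delta\gtrsim(\log(m+1)/m)^s$.

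\textbf{Step 3: small $m$ and the main difficulty.} Finitely many small values of $m$, together with the regime $T<T_0$, are absorbed by adjusting $c_s$ and $C_s$; this works because $b_m>0$ and $\log(m+1)/m$ is bounded above and below on any finite range. The only real care point is the inversion in Step 2. There one has to handle the logarithm carefully, using $\log T\asymp\log m$, which follows from $m\le C T\log T\le C T^2$ together with $T\lesssim m$. Apart from that, the argument is elementary and simply mirrors the tensor-product rearrangement estimate of \cite{Krieg2018}.
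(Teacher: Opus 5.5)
Your argument is correct, but it takes a different route from the paper. The paper does no lattice-point counting. It cites Krieg's tensor-product rearrangement theorem (Lemma~\ref{Le:counting} with $d=2$) to get $a_m\asymp(\log(m+1)/m)^s$ for the full square family $((ij)^{-s})_{i,j\ge1}$. The upper bound then follows from $b_m\le a_m$, because the $i<j$ family is a subfamily. For the lower bound, the paper writes the square family as two copies of the $i<j$ family plus the diagonal $(i^{-2s})_{i\ge1}$. This gives $a_{3m}\le\max\{b_m,m^{-2s}\}$, and since the diagonal term is eventually negligible, $b_m\ge a_{3m}$ for large $m$.

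You instead prove the two-dimensional counting estimate $\#\{(i,j):i<j,\ ij\le T\}\asymp T\log T$ directly. Restricting to $i\le\sqrt T/2$ makes the constraint $j>i$ harmless, so you need no diagonal or symmetry bookkeeping. You then invert through the exact equivalence $b_m\ge\delta\iff m\le N(\delta)$. Your route is self-contained and elementary. The paper's route is shorter given the citation and hands the counting to a general result, but it needs the extra multiset argument to pass from the square to the strict upper triangle.

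One small repair is needed in your Step~2. To go from $T\gtrsim m/\log T$ to $T\gtrsim m/\log m$ you need $\log T\lesssim\log m$, not $\log m\lesssim\log T$ as you wrote. You fix this in Step~3 by appealing to $T\lesssim m$, but you do not justify that bound. It is immediate: the $m$ entries $(1\cdot j)^{-s}$, $2\le j\le m+1$, are all at least $(m+1)^{-s}$. Hence $b_m\ge(m+1)^{-s}$, that is, $T=b_m^{-1/s}\le m+1$.
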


\subsection{Proof of Theorem \ref{thm:main_result2}}

Fix $s>0$ and let $X:=\ell_2(\N_+;\mathbb C)$ with its standard
orthonormal basis $(e_i)_{\{i\geq1\}}$. Define
\begin{equation}\label{eq:K-definition}
\mathcal K
:=
\{0\}
\cup\{a_i e_i:i\geq1\}
\cup\{a_i e_i+a_j e_j:1\leq i<j\},
\qquad
a_i:=i^{-s},
\end{equation}
and let
$
\mathcal O:=\{x\in X:\|x\|_X<2\}.
$ Since each element of $\mathcal K$ has norm at most
$$
\bigl(a_1^2+a_2^2\bigr)^{1/2}
=
\sqrt{1+2^{-2s}}
<
\sqrt2<2,
$$
thus $\mathcal K\subset\mathcal O$. 
Set
$
Y:=\ell_2(\mathcal{I};\mathbb C)$,
$\mathcal{I}:=\{(i,j)\in\N_+^2:i<j\},
$
and denote the standard orthonormal basis of $Y$ by
$(f_{ij})_{\{i<j\}}$. Define the symmetric  bilinear form
$B: X\times X\to Y$ by
\begin{equation*}
B(x,z)_{ij}
:=
\frac{x_i z_j+x_j z_i}{2},
\qquad i<j,
\end{equation*}
and set the holomorphic mapping 
\begin{equation}\label{eq:U-definition}
u(x):=B(x,x)=(x_i x_j)_{\{i<j\}}.
\end{equation}
By Definition \ref{Def:Holomorphic_mappings}, $u$ is holomorphic in $X$. 

\medskip
\noindent\textbf{Step 1: Compactness of $\mathcal K$.}
For one-coordinate points $a_{i_m}e_{i_m}$ in $\mathcal
K$, either
$(i_m)_{\{m\geq1\}}$ has a bounded subsequence, and hence a constant
subsequence, or $i_m\to\infty$, in which case
$$
\|a_{i_m}e_{i_m}\|_X=a_{i_m}\longrightarrow0.
$$
For two-coordinate points
$
a_{i_m}e_{i_m}+a_{j_m}e_{j_m}
$ in $\mathcal{K}$
with $i_m<j_m$, if $(i_m)_{\{m\geq1\}}$ is unbounded, then after
passing to a subsequence $i_m\to\infty$, and hence also
$j_m\to\infty$. Therefore,
$$
\|a_{i_m}e_{i_m}+a_{j_m}e_{j_m}\|_X
\leq
\sqrt2\,a_{i_m}
\longrightarrow0.
$$
If $(i_m)_{\{m\geq1\}}$ is bounded, we may assume that $i_m=i$ is
fixed. Then either $(j_m)_{\{m\geq1\}}$ has a constant subsequence or
$j_m\to\infty$, in which case
$$
a_i e_i+a_{j_m}e_{j_m}\longrightarrow a_i e_i.
$$
All possible limits belong to $\mathcal K$, so $\mathcal K$ is compact.

\medskip
\noindent\textbf{Step 2: Kolmogorov widths of $\mathcal K$.}
We prove that
$$
d_n(\mathcal K)_X\asymp n^{-s},
\qquad n\geq1.
$$
For the upper bound, let
$
E_n:=\operatorname{span}\{e_1,\ldots,e_n\}.
$
Every one-coordinate point has distance from $E_n$ at most
$a_{n+1}$. For $a_i e_i+a_j e_j$ with $i<j$, the distance is zero if
$i,j\leq n$, at most $a_{n+1}$ if exactly one index exceeds $n$, and,
if both indices exceed $n$,
$$
\operatorname{dist}(a_i e_i+a_j e_j,E_n)_X
\leq
\bigl(a_{n+1}^2+a_{n+2}^2\bigr)^{1/2}
\leq
\sqrt2\,(n+1)^{-s}.
$$
Hence
$$
d_n(\mathcal K)_X
\leq
\sup_{x\in\mathcal K}
\operatorname{dist}(x,E_n)_X
\leq
\sqrt2\,(n+1)^{-s}.
$$
For the lower bound, let
$
\Sigma_X:=\{0\}\cup\{a_i e_i:i\geq1\}\subset\mathcal K
$.
By monotonicity and Lemma~\ref{lem:orthogonal-spikes},
$$
d_n(\mathcal K)_X
\geq
d_n(\Sigma_X)_X
\geq
\frac{a_{2n}}{\sqrt2}
=
\frac{(2n)^{-s}}{\sqrt2}.
$$

\medskip
\noindent\textbf{Step 3: Holomorphy and boundedness of $u$.} The map $b$ is well defined. Indeed, for any $x,z\in X$, using
$
|\alpha+\beta|^2\leq2(|\alpha|^2+|\beta|^2)
$,
we obtain
$$
\begin{aligned}
\|B(x,z)\|_Y^2
=
\frac14\sum_{i<j}|x_i z_j+x_j z_i|^2\leq
\frac12\sum_{i\neq j}|x_i|^2|z_j|^2\leq
\frac12\|x\|_X^2\|z\|_X^2<\infty.
\end{aligned}
$$
Hence $B(x,z)\in Y$ and
$
\|B(x,z)\|_Y\leq 2^{-1/2}\|x\|_X\|z\|_X.
$ Moreover, by the definition of $u$ in \eqref{eq:U-definition} and the boundness of $B$, we have
\begin{equation}\label{eq:U-bound}
\|u(x)\|_Y
\leq
\frac1{\sqrt2}\|x\|_X^2.
\end{equation}
Then combining the definition of $\mathcal O$ with \eqref{eq:U-bound} yields
$u$ is uniformly bounded on $\mathcal{O}$ by $2\sqrt{2}$.

\medskip
\noindent\textbf{Step 4: Structure of $u(\mathcal K)$.}
Since
$
u(a_i e_i)=0
$
and
$$
u(a_i e_i+a_j e_j)
=
a_i a_j f_{ij}
=
(ij)^{-s}f_{ij},
\qquad 1\leq i<j,
$$
we have
\begin{equation*}
u(\mathcal K)
=
\{0\}
\cup
\{(ij)^{-s}f_{ij}:1\leq i<j\}.
\end{equation*}
Let $(b_m)_{\{m\geq1\}}$ be the nonincreasing rearrangement, counted
with multiplicity, of
$
\bigl((ij)^{-s}\bigr)_{\{1\leq i<j\}}.
$
Choose an enumeration
$
\mathcal{I}=\{\iota_1,\iota_2,\ldots\}
$
such that the coefficient of $f_{\iota_m}$ is $b_m$, and set
$
g_m:=f_{\iota_m}
$.
Then $(g_m)_{\{m\geq1\}}$ is an orthonormal basis of $Y$, and
\begin{equation}\label{eq:image-reordered}
u(\mathcal K)
=
\{0\}\cup\{b_m g_m:m\geq1\}.
\end{equation}

\medskip
\noindent\textbf{Step 5: Kolmogorov widths of $u(\mathcal K)$.}
Applying Lemma~\ref{lem:orthogonal-spikes} to
\eqref{eq:image-reordered}, we obtain
\begin{equation}\label{eq:image-spike-bound}
\frac{b_{2n}}{\sqrt2}
\leq
d_n(u(\mathcal K))_Y
\leq
b_{n+1}.
\end{equation}
By Lemma~\ref{Le:rearrangement},
\begin{align*}
b_{n+1}
&\leq
C_s
\left(\frac{\log(n+2)}{n+1}\right)^s
\leq
C_{s,1}n^{-s}\bigl(\log(n+1)\bigr)^s,\\
b_{2n}
&\geq
c_s
\left(\frac{\log(2n+1)}{2n}\right)^s
\geq
c_{s,1}n^{-s}\bigl(\log(n+1)\bigr)^s.
\end{align*}
Combining these estimates with \eqref{eq:image-spike-bound}, we obtain
constants $0<c'_s<C'_s<\infty$ such that
\begin{equation*}
c'_s n^{-s}\bigl(\log(n+1)\bigr)^s
\leq
d_n(u(\mathcal K))_Y
\leq
C'_s n^{-s}\bigl(\log(n+1)\bigr)^s,
\qquad n\geq1.
\end{equation*}
Consequently, we obtain the desired estimates
$$
d_n(\mathcal K)_X\asymp n^{-s},
\qquad
d_n(u(\mathcal K))_Y
\asymp
n^{-s}\bigl(\log(n+1)\bigr)^s.
$$

\subsection{Proof of Lemma \ref{Le:rearrangement}}\label{Sec:rearrangement}

To prove Lemma \ref{Le:rearrangement}, we first recall the following theorem from \cite{Krieg2018}.

\begin{lemma}[Theorem 2 from \cite{Krieg2018} ]
\label{Le:counting}
Let $\sigma: \mathbb{N}_+\rightarrow\mathbb{R}$ be a nonincreasing sequence. For any $d\in \mathbb N_+$, its $d$-th tensor power is the sequence
$\sigma_d:\mathbb{N}_+^d\to\mathbb{R}$, where
$$
\sigma_d(n_1,\ldots,n_d)
=
\prod_{j=1}^{d}\sigma(n_j).
$$
Let $\tau$ be the
nonincreasing rearrangement of the sequence
$\sigma_d$. For any
$s>0$, the following hold:
$$
\sigma(n)\asymp n^{-s}\Longrightarrow \tau(n)
\asymp
\frac{1}{((d-1)!)^s}
n^{-s}(\log (n+1))^{s(d-1)}.
$$
\end{lemma}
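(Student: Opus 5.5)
The plan is to reduce the statement to a lattice-point counting problem for the product $n_1\cdots n_d$, and then invert the resulting counting function. Since $d$ and $s$ are fixed, the factor $((d-1)!)^{-s}$ is a constant and is absorbed into the symbol $\asymp$. So only the two-sided bound $\tau(n)\asymp n^{-s}(\log(n+1))^{s(d-1)}$ has to be proved.

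\textbf{Step 1: reduction to the model sequence.} By hypothesis there are constants $0<c\le C$ with $c\,n^{-s}\le \sigma(n)\le C\,n^{-s}$; in particular $\sigma>0$. Multiplying over the $d$ coordinates gives
\[
c^d\,(n_1\cdots n_d)^{-s}\le \sigma_d(n_1,\ldots,n_d)\le C^d\,(n_1\cdots n_d)^{-s}.
\]
Nonincreasing rearrangement is monotone under pointwise domination. If $0\le \alpha\le\beta$ pointwise, then $\alpha^*(n)\le\beta^*(n)$, since $\alpha^*(n)=\sup\{t:\#\{\alpha>t\}\ge n\}$ and $\{\alpha>t\}\subset\{\beta>t\}$. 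It is also positively homogeneous. Hence it suffices to treat $\sigma(n)=n^{-s}$. In that case $\tau(n)=\pi(n)^{-s}$, where $\pi$ is the nondecreasing rearrangement, with multiplicity, of the products $n_1\cdots n_d$ over $\mathbb N_+^d$.

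\textbf{Step 2: the counting function.} Set $A_d(x):=\#\{\mathbf n\in\mathbb N_+^d:n_1\cdots n_d\le x\}$ for $x\ge1$. I will show by induction on $d$ that
\[
A_d(x)\asymp x\,(1+\log x)^{d-1},\qquad x\ge1,
\]
and, more precisely, that $A_d(x)\sim x(\log x)^{d-1}/(d-1)!$. The case $d=1$ is $A_1(x)=\lfloor x\rfloor$. For the induction step, condition on the last coordinate:
\[
A_d(x)=\sum_{k\le x}A_{d-1}(x/k).
\]
Insert the induction hypothesis and compare the sum with $\int_1^x (x/y)(\log(x/y))^{d-2}\,dy/(d-2)! = x(\log x)^{d-1}/(d-1)!$. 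The discretization errors are of lower order $O(x(1+\log x)^{d-2})$. The two-sided $\asymp$ version is all that is actually needed, and it follows from the same comparison with cruder constants.

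\textbf{Step 3: inversion.} By construction, $\pi(n)\le x$ if and only if $A_d(x)\ge n$. Take $x_n:=K\,n/(1+\log n)^{d-1}$ with $K$ large. Since $\log x_n\asymp\log n$ for large $n$, Step 2 gives $A_d(x_n)\ge n$, hence $\pi(n)\le x_n$. Likewise, with $K$ small one gets $A_d(x_n)<n$, hence $\pi(n)>x_n$. Therefore $\pi(n)\asymp n/(\log(n+1))^{d-1}$ for $n$ large, and finitely many small $n$ are handled by adjusting constants, since all quantities involved are positive. Raising to the power $-s$ yields the claim.

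The main obstacle is this inversion step. One has to check that the logarithmic factor is stable under $x\mapsto n$, i.e.\ that $\log x_n=\log n-(d-1)\log(1+\log n)+O(1)\asymp\log n$. The constants must be chosen uniformly so that both inequalities $A_d(x_n)\gtrless n$ hold simultaneously for all large $n$. If the sharp constant $((d-1)!)^{-s}$ is wanted as a genuine asymptotic rather than merely absorbed, the $\sim$ form of Step 2 has to be carried through this inversion as well.
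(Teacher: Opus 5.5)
Your proof is correct. It necessarily takes a different route from the paper: the paper gives no argument for this lemma and simply imports it from \cite{Krieg2018}.

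You rightly read the factor $((d-1)!)^{-s}$ as absorbed by $\asymp$. It only has content in a sharp asymptotic version (hypothesis $\sigma(n)\sim Cn^{-s}$, conclusion with $\sim$). Under a mere $\asymp$ hypothesis no sharp constant is determined, since replacing $\sigma$ by $2\sigma$ changes the constant by $2^d$.

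Your three steps then give a complete elementary proof:
\begin{itemize}
\item Domination together with monotonicity and homogeneity of the rearrangement reduces everything to $\sigma(n)=n^{-s}$. Incidentally, this shows that monotonicity of $\sigma$ is not needed for the $\asymp$ form.
\item The recursion $A_d(x)=\sum_{k\le x}A_{d-1}(x/k)$ gives $A_d(x)\asymp x(1+\log x)^{d-1}$. The upper bound follows from $\sum_{k\le x}1/k\le 1+\log x$. The lower bound follows by keeping only $k\le\sqrt x$, where $\log(x/k)\ge\frac12\log x$.
\item The inversion you flag as the main obstacle is routine. For the upper bound on $\pi(n)$, take $K\ge1$, so that $\log x_n\ge\frac12\log n$ for $n\ge n_0(d)$ independently of $K$. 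For the lower bound, take $K\le1$, so that $\log x_n\le\log n$; if $x_n<1$, then $A_d(x_n)=0<n$ trivially.
\end{itemize}

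The citation buys brevity and access to the finer asymptotics in the source. Your route buys self-containedness at exactly the level the paper uses. In the proof of Lemma~\ref{Le:rearrangement} only $d=2$ and $\sigma(k)=k^{-s}$ occur, so your Step~1 is superfluous there. Step~2 then collapses to the classical divisor-sum bound $\sum_{k\le x}\lfloor x/k\rfloor\asymp x(1+\log x)$.

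Your closing remark on the sharp constant understates what would be needed. Carrying $\sim$ through Step~3 is not enough, because the domination in Step~1 already loses the constant (it only gives $c^d$ and $C^d$). One would need the hypothesis $\sigma(n)\sim Cn^{-s}$ and a separate treatment of index tuples with a bounded coordinate. None of this is required for the statement as posed.
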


\begin{proof}[Proof of Lemma \ref{Le:rearrangement}]
Let $(a_m)_{\{m\geq 1\}}$ be the nonincreasing rearrangement, counted with
multiplicity, of the family $\bigl((ij)^{-s}\bigr)_{\{i,j\geq 1\}}$.
This family is precisely the second tensor power of the sequence
$$
\sigma(k)=k^{-s},\qquad k\geq 1.
$$
Applying Lemma \ref{Le:counting} with $d=2$, we find constants
$0<A_s<B_s<\infty$ such that
\begin{equation}\label{eq:am_bound}
A_s\left(\frac{\log(m+1)}{m}\right)^s
\leq a_m
\leq
B_s\left(\frac{\log(m+1)}{m}\right)^s,
\qquad m\geq 1.
\end{equation}
Since
$
\bigl((ij)^{-s}\bigr)_{\{1\leq i<j\}}
$
is a subfamily of
$
\bigl((ij)^{-s}\bigr)_{\{i,j\geq 1\}},
$
the corresponding nonincreasing rearrangements satisfy
$
b_m\leq a_m
$
for every $m\geq 1$. Consequently, \eqref{eq:am_bound} yields
$$
b_m
\leq
B_s\left(\frac{\log(m+1)}{m}\right)^s,
\qquad m\geq 1.
$$
Observe that, as multisets,
$$
\bigl((ij)^{-s}\bigr)_{\{i,j\geq 1\}}
=
\bigl((ij)^{-s}\bigr)_{\{i<j\}}
\cup
\bigl((ij)^{-s}\bigr)_{\{i>j\}}
\cup
\bigl(i^{-2s}\bigr)_{\{i\geq 1\}}.
$$
The two off-diagonal families have the same nonincreasing
rearrangement $\bigl(b_m\bigr)_{\{m\geq 1\}}$, while the diagonal
family is already nonincreasing and has rearrangement
$\bigl(d_m\bigr)_{\{m\geq 1\}}$, where
$
d_m=m^{-2s}.
$
Consequently, $\bigl(a_m\bigr)_{\{m\geq 1\}}$ is the nonincreasing
rearrangement of the multiset union of two copies of
$\bigl(b_m\bigr)_{\{m\geq 1\}}$ and one copy of
$\bigl(d_m\bigr)_{\{m\geq 1\}}$.

For any $m\geq 1$, each copy of
$\bigl(b_k\bigr)_{\{k\geq 1\}}$ contains at most $m-1$ terms strictly
larger than $b_m$, while
$\bigl(d_k\bigr)_{\{k\geq 1\}}$ contains at most $m-1$ terms strictly
larger than $d_m$. Hence their multiset union contains at most
$3m-3$ terms strictly larger than $\max\{b_m,d_m\}$. It follows that
\begin{equation}\label{Eq:a3m_upperbound}
a_{3m}\leq \max\{b_m,d_m\},
\qquad m\geq 1.
\end{equation}
By \eqref{eq:am_bound}, we have
$$
\frac{a_{3m}}{d_m}
\geq
3^{-s}A_s m^s\bigl(\log(3m+1)\bigr)^s,
\qquad m\geq 1.
$$
Since the right-hand side tends to infinity as $m\to\infty$, there
exists an integer $m_s\geq 1$ such that
$
a_{3m}>d_m
$
for every $m\geq m_s$. Combining this inequality with
\eqref{Eq:a3m_upperbound}, we obtain
$$
a_{3m}\leq b_m,
\qquad m\geq m_s.
$$
Applying the lower bound in \eqref{eq:am_bound} once again yields
$$
\begin{aligned}
b_m
\geq
\frac{A_s}{3^s}
\left(\frac{\log(m+1)}{m}\right)^s,
\qquad m\geq m_s.
\end{aligned}
$$
To extend this estimate to all $m\geq 1$, define
$$
c_s
:=
\min\left\{
\frac{A_s}{3^s},
\min_{1\leq m<m_s}
b_m\left(\frac{m}{\log(m+1)}\right)^s
\right\},
$$
where the second minimum is omitted when $m_s=1$. Since $b_m>0$ for
every $m\geq 1$, we have $c_s>0$. It follows that
$$
b_m
\geq
c_s\left(\frac{\log(m+1)}{m}\right)^s,
\qquad m\geq 1.
$$
Together with the upper bound established above, this proves
\eqref{eq:b-asymptotic} and completes the proof.
\end{proof}

\section{Proof of Theorems \ref{thm:main_result3} and \ref{thm:main_result4}}\label{Sec:proof_thm3_4}
The following lemma is an immediate consequence of Lemma \ref{Le:block-truncation_algebraic_width}.
\begin{lemma}[Exponential block truncation]\label{Le:block-truncation_exponential_width}
Let $\mathcal F\subset Y$ satisfy the assumptions of
Lemma~\ref{Le:block-truncation_algebraic_width}. Suppose that there exist constants
$C_1,C_2,\alpha>0$ and positive family
$(E_\lambda)_{\{\lambda\in\mathcal I\}}$ such that
\begin{equation}\label{eq:partition-abstract}
\sum_{\lambda\in\mathcal I}
D_\lambda\ee^{-\tau E_\lambda}
\leq
\ee^{C_1\tau^{-1/\alpha}}\quad \text{for any }\tau\in(0,1],
\end{equation}
and
$a_\lambda\leq C_2\ee^{-E_\lambda}
$ for any $\lambda\in\mathcal I$. Then there exist constants $C_3,c_3>0$
such that
$$
d_n(\mathcal F)_Y
\leq
C_3\ee^{
-c_3\bigl(\log(n-d_0+1)\bigr)^{\alpha+1}
},
\quad n>d_0.
$$
\end{lemma}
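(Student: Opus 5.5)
The plan is to apply Lemma~\ref{Le:block-truncation_algebraic_width} with a $p$ that depends on $n$, and to estimate $S_p$ through the partition bound \eqref{eq:partition-abstract}. Write $N:=n-d_0\ge1$ and let $p\in(0,1)$ be a free parameter. Since $a_\lambda\le C_2\ee^{-E_\lambda}$,
\[
S_p=\sum_{\lambda}D_\lambda^{1-p}a_\lambda^p\le C_2^p\sum_\lambda D_\lambda^{1-p}\ee^{-pE_\lambda}.
\]
The factor $D_\lambda^{1-p}$ is bounded crudely by $D_\lambda$, using $D_\lambda\ge1$. Taking $\tau=p\in(0,1]$ in \eqref{eq:partition-abstract} then gives
\[
S_p\le C_2^p\,\ee^{C_1p^{-1/\alpha}}.
\]
Lemma~\ref{Le:block-truncation_algebraic_width} therefore yields
\[
d_n(\mathcal F)_Y\le S_p^{1/p}N^{-(1-p)/p}\le C_2\exp\!\Big(C_1p^{-1-1/\alpha}-\tfrac{1-p}{p}\log N\Big).
\]

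The key step is optimizing over $p$. For small $p$ we have $(1-p)/p\ge 1/(2p)$ once $p\le 1/2$, so the exponent is at most
\[
\frac{1}{p}\Big(C_1p^{-1/\alpha}-\tfrac12\log N\Big).
\]
We choose $p$ so that $C_1p^{-1/\alpha}=\tfrac14\log N$, that is, $p=(4C_1/\log N)^{\alpha}$. The exponent then becomes
\[
-\frac{\log N}{4p}=-c\,(\log N)^{1+\alpha},\qquad c=\frac{1}{4(4C_1)^\alpha}.
\]
This choice is admissible, meaning $p\le1/2$, only when $\log N\ge 4C_1 2^{1/\alpha}$, so only for $N$ larger than a threshold $N_0$.

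For the finitely many $N<N_0$ we use a crude bound. Letting $\delta\to0$ in the proof of Lemma~\ref{Le:block-truncation_algebraic_width}, or simply using (i)--(ii), gives $d_n(\mathcal F)_Y\le\sum_\lambda a_\lambda<\infty$. Finiteness holds because
\[
\sum_\lambda a_\lambda\le C_2\sum_\lambda D_\lambda\ee^{-E_\lambda}\le C_2\ee^{C_1},
\]
by \eqref{eq:partition-abstract} with $\tau=1$. Enlarging $C_3$ covers these $N$. Replacing $\log N$ by $\log(N+1)$ only changes constants, since $\log(N+1)\le 2\log N$ for $N\ge2$, and the case $N=1$ falls among the small cases.

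The only delicate point is the balance between the entropy term $p^{-1-1/\alpha}$ and the gain $(\log N)/p$. It is exactly this balance that produces the power $\alpha+1$ of $\log$ rather than any power of $N$. Otherwise the argument is a routine optimization.
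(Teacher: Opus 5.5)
Your proposal is correct and follows the paper's proof essentially verbatim. The paper also bounds $D_\lambda^{1-p}\le D_\lambda$, applies \eqref{eq:partition-abstract} with $\tau=p$, and feeds the result into Lemma~\ref{Le:block-truncation_algebraic_width} with $p=\kappa^{-1}(\log N)^{-\alpha}$ and $C_1\kappa^{1/\alpha}\le\frac14$; in the equality case this is exactly your $p=(4C_1/\log N)^{\alpha}$. You also make explicit two details the paper leaves implicit: the bound $d_n(\mathcal F)_Y\le\sum_\lambda a_\lambda\le C_2\ee^{C_1}$ for small $N$, and the passage from $\log N$ to $\log(N+1)$.
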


\begin{proof}
Let $\kappa>0$ be chosen sufficiently small so that
$
C_1\kappa^{1/\alpha}\leq \frac14
$.
For sufficiently large $N$ satisfying $\kappa (\log N)^\alpha>1$, let
$
p:=\kappa^{-1}(\log N)^{-\alpha}\in(0,1)
$. Since $D_\lambda\geq1$, it follows that
$D_\lambda^{1-p}\leq D_\lambda$. Hence,
$$
\begin{aligned}
S_p
:=
\sum_{\lambda\in\mathcal I}
D_\lambda^{1-p}a_\lambda^p
\leq
C_2^p\sum_{\lambda\in\mathcal I}
D_\lambda \ee^{-pE_\lambda}.
\end{aligned}
$$
Applying \eqref{eq:partition-abstract} with $\tau=p$, we obtain
$
S_p
\leq
C_2^p\exp\left(C_1p^{-1/\alpha}\right).
$
Consequently, Lemma~\ref{Le:block-truncation_algebraic_width} with $N:=n-d_0$ gives
\begin{equation*}
d_n(\mathcal F)_Y
\leq
C_2\exp\left(C_1p^{-1-1/\alpha}\right)
N^{-(1-p)/p}.
\end{equation*}
A direct calculation gives
$
\exp\left(C_1p^{-1-1/\alpha}\right)
N^{-(1-p)/p}
\leq
\exp\left(-\frac{\kappa}{2}(\log N)^{\alpha+1}\right).
$
Hence,
$$
d_n(\mathcal F)_Y
\leq
C_2\exp\left(-\frac{\kappa}{2}(\log N)^{\alpha+1}\right)
$$
for all sufficiently large $N$. By increasing the multiplicative constant if necessary, we can absorb the finitely many remaining values of $n$, which completes the proof of the lemma.
\end{proof}

\subsection{Proof of Theorem \ref{thm:main_result3}}
The proof follows the same general strategy as that of
Theorem~\ref{thm:main_result1}, with
Lemma~\ref{Le:block-truncation_exponential_width} replacing the
corresponding algebraic block-truncation result. In addition, the
symmetry of the Taylor coefficients plays an essential role in controlling
the dimension growth of high-order blocks.

\vspace{2mm}
\noindent\textbf{Step 1: Dyadic approximation.}
By the assumption and the same argument used in the proof of
Theorem~\ref{thm:main_result1}, for every $k\in\mathbb N$ there exists
a finite-dimensional subspace $V_k\subset X$ such that
$
\dim V_k\leq 2^{k+1},
$
and the family $(V_k)_{\{k\in\mathbb N\}}$ is nested. Moreover, since
finite-dimensional subspaces of normed spaces are proximinal, for every
$x\in\mathcal K$ there exists a best approximant
$\pi_k(x)\in V_k$ satisfying
\begin{equation}
\label{eq:exponential-best-approximation}
\|x-\pi_k(x)\|_X
=
\operatorname{dist}(x,V_k)_X
\leq
C_A\ee^{-c_0 2^{\alpha k}},
\qquad
C_A:=C_0+1.
\end{equation}

\medskip
\noindent\textbf{Step 2: Localized decomposition into blocks.}
We proceed as in \textbf{Step 2} of the proof of
Theorem~\ref{thm:main_result1}. Given any $J\in\mathbb{N}$, for any $a,x\in\mathcal{K}$ satisfying
\begin{equation}
\label{eq:exponential-local-close}
\|x-a\|_X\leq \ee^{-c_02^{\alpha J}}.
\end{equation}
As in the algebraic case, we obtain the representation
$$
x-a=\sum_{r=0}^{\infty}g_r(x,a),
\qquad
g_r(x,a)\in V_{J+r},
$$
with convergence in $X$. Moreover, there exist constants
$C_G,c_G>0$, independent of $J$, $a$, and $x$, such that
\begin{equation}
\label{eq:exponential-block-size}
\|g_r(x,a)\|_X
\leq
C_G\ee^{-c_G2^{\alpha(J+r)}},
\qquad r\in\mathbb N.
\end{equation}

\medskip
\noindent\textbf{Step 3: Global Taylor blocks.}
For all $r\in \mathbb N$, set
\begin{equation}
\label{eq:exponential-beta-d}
\beta_r
:=
\frac{\ee C_G}{\rho}
\exp\left(-\frac{c_G}{2}2^{\alpha(J+r)}\right)\quad \text{and}\quad
 d_r:=2^{J+r+1}.
\end{equation}
Let $q_J:=\sum_{r=0}^\infty\beta_r$.
Since both $\ee^{-c_0 2^{\alpha J}}$ and $q_J$ tend to zero as
$J\to\infty$, we may choose $J$ sufficiently large so that
$
\ee^{-c_0 2^{\alpha J}}<2\rho
$
and
$
q_J<1,
$
where $\rho>0$ is the constant fixed in the proof of
Theorem~\ref{thm:main_result1}, depending only on $\mathcal K$ and
$\mathcal O$. Since $J$ is fixed throughout the remainder of the proof, we suppress
the subscript $J$ and write $q$ in
place of $q_J$.
By compactness of $\mathcal{K}$, there exist $a_1,\ldots,a_M\in\mathcal K$ such that
\begin{equation}
\label{eq:exponential-finite-cover}
\mathcal K
\subset
\bigcup_{i=1}^M
B_X\left(a_i,\ee^{-c_02^{\alpha J}}\right)
\subset\mathcal O.
\end{equation}
For each $x\in\mathcal K$, select one index $i=i(x)$ for which $x$ belongs to the corresponding ball, and let
$
g_{i,r}(x):=g_r(x,a_i)
$  obtained from $\textbf{Step 2}$ for all $r\in \mathbb N $.
Then we have
\begin{equation}\label{eq:x_ai_patch-blocks_exponential}
x-a_i=\sum_{r=0}^{\infty}g_{i,r}(x),
\end{equation}
The choice of $J$ ensure that the Taylor expansion around $a_i$ converges absolutely.

For the remainder of this section, let $\mathfrak N$ denote the set of all finitely supported sequences
$
\nu=(\nu_j)_{\{j\geq0\}}
$
of nonnegative integers. 
For any $\nu\in\mathfrak N$ we write
$$
\nu!:=\prod_{r\geq0}\nu_r!,\qquad|\nu|:=\sum_{r\geq0}\nu_r.$$
For $m\ge1$ and an ordered multi-index
$
 \mathbf r=(r_1,\ldots,r_m)\in\mathbb N^m,
$
and $x\in\mathcal{K}$ with $i=i(x)$ be the index selected above and define
$F_{i,m,\mathbf r}(x):=A_{i,m}\bigl(g_{i,r_1}(x),\ldots,g_{i,r_m}(x)\bigr)$ where $A_{i,m}:=A_{a_i,m}$ is the $m$-th Taylor coefficient provided by
Lemma~\ref{lem:taylor-bound}.
By multilinearity, the decomposition \eqref{eq:x_ai_patch-blocks_exponential}, and the
multilinear Taylor estimate, we obtain the expansion
\begin{equation}\label{eq:full-block-expansion_exp}
u(x)
=
u(a_i)
+
\sum_{m=1}^{\infty}
\sum_{\mathbf r\in\N^m}
F_{i,m,\mathbf r}(x),
\end{equation}
which converges absolutely in $Y$, uniformly with respect to
$x\in\mathcal{K}$.
We now group the ordered multi-indices according to their
multiplicities. For
$
\mathbf r=(r_1,\ldots,r_m)\in\mathbb N^m,
$
define its multiplicity sequence
$\nu=(\nu_q)_{\{q\geq0\}}$ by
$$
\nu_q
:=
\#\{j\in\{1,\ldots,m\}:r_j=q\},
\qquad q\geq0.
$$
In fact, $\nu_q$ is the multiplicity of $g_{i,q}(x)$ in the ordered
family
$
\bigl(g_{i,r_j}(x)\bigr)_{\{j=1\}}^m.
$
Thus, $\nu=(\nu_q)_{\{q\geq0\}}\in\mathfrak N$ and $|\nu|=m$. Conversely, for any $\nu\in\mathfrak N$ with $|\nu|=m$, let
$$
\mathcal R_\nu
:=
\left\{
\mathbf r=(r_1,\ldots,r_m)\in\mathbb N^m:
\#\{j\in\{1,\ldots,m\}:r_j=q\}=\nu_q
\text{ for every }q\geq0
\right\}.
$$
Then we have the disjoint decomposition
\begin{equation}\label{Eq:N_disjointdecomposition}
\mathbb N^m
=
\bigcup_{\substack{\nu\in\mathfrak N\\|\nu|=m}}
\mathcal R_\nu.
\end{equation}
For any $\nu\in\mathfrak N$ with $|\nu|=m$, using $A_{i,m}$ is symmetric,
 we have
$$
F_{i,m,\mathbf r}(x)
=
A_{i,m}
\bigl(
g_{i,0}(x)^{\nu_0},
g_{i,1}(x)^{\nu_1},
\ldots
\bigr)\quad \text{for any }\mathbf r\in\mathcal R_\nu.
$$
Thus, 
\begin{equation}\label{eq:sum_r_F}
\sum_{\mathbf r\in\mathcal R_\nu}
F_{i,m,\mathbf r}(x)
=
\#\mathcal R_\nu\,
A_{i,m}
\bigl(
g_{i,0}(x)^{\nu_0},
g_{i,1}(x)^{\nu_1},
\ldots
\bigr).
\end{equation}
Combining \eqref{Eq:N_disjointdecomposition} and \eqref{eq:sum_r_F}
with the identity
$
\#\mathcal R_\nu=m!/\nu!
$
yields
\begin{equation}
\sum_{\mathbf r\in\mathbb N^m}
F_{i,m,\mathbf r}(x)
=
\sum_{\substack{\nu\in\mathfrak N\\|\nu|=m}}
\sum_{\mathbf r\in\mathcal R_\nu}
F_{i,m,\mathbf r}(x)
=
\sum_{\substack{\nu\in\mathfrak N\\|\nu|=m}}
\frac{m!}{\nu!}
A_{i,m}
\bigl(
g_{i,0}(x)^{\nu_0},
g_{i,1}(x)^{\nu_1},
\ldots
\bigr),\label{eq:sum_F_m_equality}
\end{equation}
where
$g_{i,r}(x)^{\nu_r}$ means that $g_{i,r}(x)$ occurs as an argument of
$A_{i,|\nu|}$ exactly $\nu_r$ times.
The regrouping is justified by the absolute convergence of
\eqref{eq:full-block-expansion_exp}.
For $i\in\{1,\ldots,M\}$ and
$\nu\in\mathfrak N\setminus\{0\}$, define
\begin{equation}
\label{eq:exponential-F-block}
F_{i,\nu}(x)
:=
\frac{|\nu|!}{\nu!}
A_{i,|\nu|}
\bigl(
g_{i,0}(x)^{\nu_0},
g_{i,1}(x)^{\nu_1},\ldots
\bigr).
\end{equation}
By \eqref{eq:sum_F_m_equality} and
\eqref{eq:exponential-F-block}, the expansion
\eqref{eq:full-block-expansion_exp} can be written as
\begin{equation}
\label{eq:full-multiplicity-expansion}
u(x)
=
u(a_i)
+
\sum_{\nu\in\mathfrak N\setminus\{0\}}
F_{i,\nu}(x).
\end{equation}

\medskip
\noindent\textbf{Step 4: Block amplitude estimate.} 
For each $\nu\in\mathfrak N$, define
$$
E(\nu)
:=
-|\nu|\log q
+
\frac{c_G}{2}
\sum_{r\geq0}\nu_r2^{\alpha(J+r)},
$$
where $q=\sum_{r=0}^\infty\beta_r\in(0,1)$ is fixed as in $\textbf{Step 3}$. By \eqref{eq:multilinear-cauchy},
\eqref{eq:exponential-block-size}, and
\eqref{eq:exponential-F-block}, we obtain
$$
\|F_{i,\nu}(x)\|_Y
\leq
C\frac{|\nu|!}{\nu!}
\prod_{r\geq0}
\left(
\frac{\ee C_G}{\rho}\ee^{-c_G2^{\alpha(J+r)}}
\right)^{\nu_r}.
$$
To estimate the norm of each block, we split the exponential decay
inside the parentheses on the right-hand side into two equal factors.
Since
$
\beta_r=(\ee C_G/\rho)
\exp\bigl(-\frac{c_G}{2}2^{\alpha(J+r)}\bigr),
$
we have
$$
\begin{aligned}
\frac{|\nu|!}{\nu!}
\prod_{r\geq0}
\left(
\frac{\ee C_G}{\rho}\ee^{-c_G2^{\alpha(J+r)}}
\right)^{\nu_r}
&=
\left(
\frac{|\nu|!}{\nu!}
\prod_{r\geq0}\beta_r^{\nu_r}
\right)
\exp\left(
-\frac{c_G}{2}
\sum_{r\geq0}\nu_r2^{\alpha(J+r)}
\right)
\end{aligned}
$$
By the multinomial formula,
$$
\frac{|\nu|!}{\nu!}
\prod_{r\geq0}\beta_r^{\nu_r}
\leq
\left(\sum_{r\geq0}\beta_r\right)^{|\nu|}
=
q^{|\nu|}.
$$
Thus, it holds that
\begin{equation}
\label{eq:exponential-energy-bound}
\|F_{i,\nu}(x)\|_Y
\leq
C\ee^{-E(\nu)}\quad \text{for any } \nu\in\mathfrak N.
\end{equation}

\medskip
\noindent\textbf{Step 5: Partition-function estimate.} 
For each $i\in{1,2,\ldots,M}$ and
$\nu\in\mathfrak N\setminus{0}$, with $|\nu|=m$, define
\begin{equation}
\label{eq:W-i-nu}
W_{i,\nu}
:=
\operatorname{span}
\left\{
A_{i,m}
\bigl(
v_{0,1},\ldots,v_{0,\nu_0},
v_{1,1},\ldots,v_{1,\nu_1},
\ldots
\bigr)
:
v_{r,\ell}\in V_{J+r}
\right\}
\subset Y,
\end{equation}
where $r\geq0$ and $1\leq\ell\leq\nu_r$, with the convention that, whenever $\nu_k=0$, the block of arguments corresponding to $r=k$ is omitted.
Since $\nu$ is finitely
supported, only finitely many arguments occur in the definition. For a finite-dimensional vector space $V$ and $m\in\mathbb N$, 
we denote by $\operatorname{Sym}^{m}(V)$ the $m$-th symmetric tensor power of $V$.
It is well known that
$
\dim\operatorname{Sym}^{m}(V)
=
\binom{\dim V+m-1}{m}
$
; see, e.g., \cite{FultonHarris1991}. Since $A_{i,|\nu|}$ is symmetric and $\dim V_{J+r}\leq d_r=2^{J+r+1}$, we have
\begin{equation}
\label{eq:exponential-block-dimension}
\dim W_{i,\nu}
\leq \prod_{r\geq0}
\dim\bigl(\operatorname{Sym}^{\nu_r}(V_{J+r})\bigr)\leq
\prod_{r\geq0}
\binom{d_r+\nu_r-1}{\nu_r}:=D_\nu
.
\end{equation}
Here the factors corresponding to $\nu_r=0$ are understood to be equal to $1$, and the product is finite because $\nu$ is finitely supported. Next, we verify that $E(\nu)$ defined in \textbf{Step 4}
satisfy condition \eqref{eq:partition-abstract} of
Lemma~\ref{Le:block-truncation_exponential_width}. Recall that
$$
E(\nu)
=
\delta|\nu|
+
\gamma\sum_{r\geq0}\nu_r2^{\alpha(J+r)},
$$
where $\delta=-\log q>0$ and $\gamma=c_G/2>0$. For any
$\tau\in(0,1]$, define
$$
Z(\tau)
:=
\sum_{\nu\in\mathfrak N}
D_\nu \ee^{-\tau E(\nu)}.
$$
It follows from \eqref{eq:exponential-block-dimension} that
\begin{align}
Z(\tau)
&=
\sum_{\nu\in\mathfrak N}
\prod_{r\geq0}
\left[
\binom{d_r+\nu_r-1}{\nu_r}
\ee^{-\tau\nu_r(\delta+\gamma2^{\alpha(J+r)})}
\right]\notag
\\
&=
\prod_{r=0}^{\infty}
\left[
\sum_{k=0}^{\infty}
\binom{d_r+k-1}{k}
\ee^{-\tau k(\delta+\gamma2^{\alpha(J+r)})}
\right]\notag
\\
&=
\prod_{r=0}^{\infty}
\left(
1-\ee^{-\tau(\delta+\gamma2^{\alpha(J+r)})}
\right)^{-d_r}\label{eq:exponential-partition-product},
\end{align}
where the last equality follows from the generating function
$$
\sum_{k=0}^{\infty}
\binom{d+k-1}{k}z^k
=
(1-z)^{-d},
\qquad z\in (-1,1).
$$
Taking logarithms of \eqref{eq:exponential-partition-product} and using
the power-series identity
$$
-\log(1-z)
=
\sum_{\ell=1}^{\infty}\frac{z^\ell}{\ell},
\qquad z\in (0,1),
$$
we obtain
\begin{equation}\label{eq:partitionlog}
\log Z(\tau)
=
\sum_{r=0}^{\infty}
d_r
\sum_{\ell=1}^{\infty}
\frac{
\ee^{-\ell\tau(\delta+\gamma2^{\alpha(J+r)})}
}{\ell}
=
\sum_{\ell=1}^{\infty}
\frac{\ee^{-\ell\tau\delta}}{\ell}
\sum_{r=0}^{\infty}
d_r
\ee^{-\ell\tau\gamma2^{\alpha(J+r)}}.
\end{equation}
A standard dyadic sum--integral comparison shows that there exists a
constant $C_\alpha>0$, independent of $b$ and $J$, such that
\begin{equation}
\label{eq:dyadic-exponential-sum}
\sum_{r=0}^{\infty}
2^{J+r}\ee^{-b2^{\alpha(J+r)}}
\leq
C_\alpha b^{-1/\alpha},
\qquad \text{for any }b>0.
\end{equation}
Since $d_r=2^{J+r+1}$, applying
\eqref{eq:dyadic-exponential-sum} with $b=\ell\tau\gamma$ gives
$$
\sum_{r=0}^{\infty}
d_r \ee^{-\ell\tau\gamma2^{\alpha(J+r)}}
\leq
C_{\gamma}(\ell\tau)^{-1/\alpha}\quad  \text{for any } \ell \in \mathbb N \text{ and any } \tau\in (0,1],
$$
where $C_{\gamma}=2\gamma^{-1/\alpha} C_\alpha$ is independent of $\ell$ and $\tau$. Substituting this
estimate into \eqref{eq:partitionlog} yields
\begin{align}
\log Z(\tau)
&\leq
C_{\gamma}\tau^{-1/\alpha}
\sum_{\ell=1}^{\infty}
\ell^{-1-1/\alpha}\ee^{-\ell\tau\delta}
\leq
C_{\gamma}\tau^{-1/\alpha}
\sum_{\ell=1}^{\infty}
\ell^{-1-1/\alpha}
\leq
C_1\tau^{-1/\alpha}\notag,
\end{align}
where $C_1$ is independent of $\tau$.
Therefore,
\begin{equation}
\label{eq:partition-final}
Z(\tau)\notag
\leq
\exp\left(C_1\tau^{-1/\alpha}\right),
\quad \text{for any } \tau\in (0,1].
\end{equation}
Finally, since the full family of blocks is indexed by
$(i,\nu)\in\{1,\ldots,M\}\times
(\mathfrak N\setminus\{0\})$, we have
\begin{equation}\label{Eq:sum_D_E_bound}
\sum_{i=1}^{M}
\sum_{\nu\in\mathfrak N\setminus\{0\}}
D_\nu \ee^{-\tau E(\nu)}
\leq
MZ(\tau)
\leq
\exp\left(C_2\tau^{-1/\alpha}\right),
\quad \text{for any } \tau\in (0,1],
\end{equation}
after enlarging the constant $C_2$. Thus
condition \eqref{eq:partition-abstract} is satisfied.

\vspace{2mm}
\noindent\textbf{Step 6: Upper bounds of Kolmogorov $n$-width.}
We now apply the abstract Lemma \ref{Le:block-truncation_exponential_width} globally, exactly as in the algebraic proof. Define
$$
Y_0:=\spann\{u(a_1),\ldots,u(a_M)\},
\qquad
\dim Y_0\leq M,
$$
and index the blocks by $(i,\nu)$. For a fixed $x\in\mathcal K$, only the blocks corresponding to the selected patch $i(x)$ occur in \eqref{eq:full-multiplicity-expansion}; all blocks belonging to the other patches are set equal to zero. 
Combining \eqref{eq:exponential-energy-bound} and
\eqref{Eq:sum_D_E_bound}, and applying
Lemma~\ref{Le:block-truncation_exponential_width} with $d_0=M$, we obtain
\begin{equation}
\label{eq:global-exponential-rate-minus-M}
d_n\bigl(u(\mathcal K)\bigr)_Y
\leq
C\exp\left(
-c\bigl[\log(n-M+1)\bigr]^{\alpha+1}
\right),
\qquad n>M.
\end{equation}
For $n\geq2M$, one has $n-M+1\geq(n+1)/2$, and therefore the right-hand side of \eqref{eq:global-exponential-rate-minus-M} is bounded by
$
C'\exp\left(-c'[\log(n+2)]^{\alpha+1}\right).
$
The finitely many indices $1\leq n<2M$ are absorbed by increasing the constant. 
This completes the proof.

\begin{remark}
In the algebraic decay case, the Taylor expansion is indexed by ordered
multi-indices
$
\mathbf r=(r_1,\ldots,r_m)\in\mathbb N^m,
$
so that each permutation is treated as a separate block. This is
sufficient because the corresponding weighted sum factorizes into a
geometric series. In the exponential case, we instead group together
all ordered multi-indices having the same multiplicities. 
The multiplicity formulation allows us to exploit symmetric tensor
dimensions and derive the partition-function estimate required for the
sharp exponential bound. Thus, the two definitions represent the same
Taylor expansion, but the latter provides the finer counting needed in
the exponential regime.

\end{remark}
\subsection{Proof of Theorem \ref{thm:main_result4}}
\begin{proof}
Let $X:=\ell_2(\N_+;\C)$
and define the set
\begin{equation}\label{eq:boxK}
\mathcal{K}:=\left\{x=(x_j)_{j\ge1}\in\ell_2:
 |x_j|\le a_j,\quad
 a_j:=\ee^{-\kappa j^\alpha},\ \text{for every }j\right\},
\end{equation}
where $\kappa>0$ is a  fixed constant. Compactness follows because the coordinate tails are uniformly small in $\ell_2$. Let $\mathfrak S$ be the collection of all finite nonempty subsets of $\N_+$, and set
$$
Y:=\ell_2(\mathfrak S;\C).
$$
Fix $0<\eta<1$ and define
\begin{equation}\label{eq:entire-map}
u(x)_S:=\eta^{|S|}\prod_{j\in S}x_j,
 \qquad \text{for any }S\in\mathfrak S.
\end{equation}
By following the same argument as in the proof of
Theorem~\ref{thm:main_result2}, we can verify that
$\mathcal K$ and $u$ satisfy the corresponding assumptions
of Theorem~\ref{thm:main_result4} and that condition \textup{(ii)} therein
is also satisfied. Indeed, for any $x\in X$, we have
\begin{align}
 \|u(x)\|_Y^2
 =\sum_{S\in\mathfrak S}
 \eta^{2|S|}\prod_{j\in S}|x_j|^2\notag
 =\prod_{j=1}^\infty(1+\eta^2|x_j|^2)-1\notag\le \ee^{\eta^2\|x\|_X^2}-1.
 \label{eq:U-bound}
\end{align}
Hence $u$ is holomorphic and bounded on bounded subsets of $X$.
Then the upper bound of Kolmogorov $n$-width of $u(\mathcal{K})$ follows from Theorem \ref{thm:main_result3}.
It remains to prove the lower bound. Given any integer $N\ge2$, set
$$
 m_N:=\lfloor N/2\rfloor\quad
 \text{and}\quad
 \mathfrak S_N:=\{S\subset\{1,\ldots,N\}:|S|=m_N\}.
$$
Consider the subset $\mathcal S_N\subset\mathcal K$ defined by
$$
\mathcal S_N
:=
\left\{
x^S:=\sum_{j\in S}a_j e_j\in \mathcal{K}
:
S\in\mathfrak S_N
\right\}.
$$
Then $|\mathcal{S}_N|=|\mathfrak{S}_N|=\binom{N}{m_N}
 \ge
\frac{2^N}{N+1}$. Let $(f_S)_{S\in\mathfrak S}$ be an orthonormal basis of $Y$ and $P_N:Y\to Y_N$ denote the orthogonal projection onto
$
Y_N
:=
\operatorname{span}
\left\{
f_S:S\in\mathfrak S_N
\right\}
\subset Y$. Consequently,
\begin{equation}\label{eq:projected-spike}
 P_Nu(x^S)
 =\eta^{m_N}\left(\prod_{j\in S}a_j\right)f_S\quad \text{for any }x^S\in \mathcal{S}_N.
\end{equation}
These vectors are mutually orthogonal. Moreover, by the definitions
$m_N:=\lfloor N/2\rfloor$ and $a_j:=\ee^{-\kappa j^\alpha}$, we have
\begin{equation}
\label{eq:spike-amplitude}
\eta^{m_N}\prod_{j\in S}a_j
=
\exp\left(
m_N\log\eta-\kappa\sum_{j\in S}j^\alpha
\right)
\geq
\exp\left(-CN^{\alpha+1}\right),\quad  \text{for any }
S\in\mathfrak S_N,
\end{equation}
where $C>0$ is independent of $N$ and $S$.
Let $n_N:=\lfloor M_N/2\rfloor$. Since the Kolmogorov widths do not increase under linear contractions and
$P_N$ is an orthogonal projection with $\|P_N\|=1$, we have
$$
d_{n_N}\bigl(P_Nu(\mathcal K)\bigr)_Y
\leq d_{n_N}\bigl(u(\mathcal K)\bigr)_Y
.
$$
Then, by $P_Nu(\mathcal{S}_N)\subset P_Nu(\mathcal{K})$ in \eqref{eq:projected-spike} and using Lemma \ref{lem:orthogonal-spikes} and \eqref{eq:spike-amplitude}, we obtain
\begin{equation}\label{eq:subsequence-lower}
 d_{n_N}(u(\mathcal{K}))_Y
 \ge d_{n_N}(P_Nu(\mathcal{S}_N))_Y
 \ge c\exp(-CN^{\alpha+1}).
\end{equation}
Since $\log n_N\asymp N$, this is
\[
 d_{n_N}(u(\mathcal{K}))_Y
 \ge c\exp\bigl(-C[\log(n_N+2)]^{\alpha+1}\bigr).
\]
The sequence $n_N$ grows geometrically up to polynomial factors. Monotonicity of $d_n$ therefore extends the estimate, after changing constants, to every sufficiently large $n$. The finitely many remaining values are absorbed into the constants. The proof is complete.
\end{proof}

\section{Conclusion}\label{Sec:conclusion}

We have established sharp general estimates for Kolmogorov
$n$-widths of compact sets under holomorphic mappings between complex
Banach spaces. In the algebraic regime, we establish
$$
d_n(\mathcal K)_X=O(n^{-s})
\quad
\text{for some } s>0 \Longrightarrow
d_n(u(\mathcal K))_Y=O(n^{-t})
\quad 
\text{for every } t<s.
$$
This improves the previously known loss of one full
power and is optimal in general, since the endpoint $t=s$ may fail even
for continuous two-homogeneous polynomial mappings.
In the exponential regime, we prove 
$$
d_n(\mathcal K)_X
\leq
C_0\exp(-c_0n^\alpha)
\quad\Longrightarrow\quad
d_n(u(\mathcal K))_Y
\leq
  C_1\exp\left(-c_1[\log(n+2)]^{\alpha+1}\right).
$$
A matching construction shows that this rate is sharp for general holomorphic mappings. These results clarify
the  difference between algebraic and exponential width
decay and identify the optimal universal behavior in both regimes.
Determining additional structural assumptions that ensure endpoint
preservation in the algebraic regime remains to be a direction for future research.

\section*{Funding}
This work was supported by the National Natural Science Foundation of China under grant
12471346.

\section*{Declaration of generative AI use}
During the preparation of this work, the authors used ChatGPT to assist with language refinement, manuscript organization,
and the presentation of mathematical arguments. After using this tool,
the authors reviewed and edited the content as needed, independently
verified all mathematical results, and take full responsibility for the
content of the publication.

\bibliographystyle{elsarticle-num}

\end{document}